\title[On the Voronoi Conjecture for combinatorially Voronoi parallelohedra]{On the Voronoi Conjecture for combinatorially Voronoi parallelohedra in dimension five}
\def\QuotS#1#2{\leavevmode\kern-.0em\raise.2ex\hbox{$#1$}\kern-.1em/\kern-.1em\lower.25ex\hbox{$#2$}}
\begin{document}

\author[M. D. Sikiri\'c]{Mathieu Dutour Sikiri\'c}
\address{Mathieu Dutour Sikiri\'c, Rudjer Boskovi\'c Institute, Bijeni\v{c}ka 54, 10000 Zagreb, Croatia}
\email{mathieu.dutour@gmail.com}

\author[A. Garber]{Alexey Garber}
\address{Alexey Garber, School of Mathematical \& Statistical Sciences, The University of Texas Rio Grande Valley, 1 West University Blvd, Brownsville, TX, 78520, USA}
\email{alexeygarber@gmail.com}

\author[A. Magazinov]{Alexander Magazinov}
\address{Alexander Magazinov, Faculty of Mathematics, ``Higher School of
Economics'' National Research University, 6 Usacheva str., Moscow 119048,
Russia}
\email{amagazinov@hse.ru}

%\thanks{The first author has been supported by the Humboldt Foundation}

\newcommand{\RR}{\ensuremath{\mathbb{R}}}
\newcommand{\NN}{\ensuremath{\mathbb{N}}}
\newcommand{\QQ}{\ensuremath{\mathbb{Q}}}
\newcommand{\CC}{\ensuremath{\mathbb{C}}}
\newcommand{\ZZ}{\ensuremath{\mathbb{Z}}}
\newcommand{\TT}{\ensuremath{\mathbb{T}}}

\newtheorem{theorem}{Theorem}[section]
\newtheorem{proposition}[theorem]{Proposition}
\newtheorem{corollary}[theorem]{Corollary}
\newtheorem{lemma}[theorem]{Lemma}
\newtheorem{problem}[theorem]{Problem}
\newtheorem*{conjecture}{Conjecture}
\newtheorem{question}{Question}
\newtheorem{claim}{Claim}

\theoremstyle{definition}
\newtheorem{definition}[theorem]{Definition}

\theoremstyle{remark}
\newtheorem*{remark}{Remark}

\begin{abstract}
In a recent paper Garber, Gavrilyuk and Magazinov proposed a sufficient
combinatorial condition for a parallelohedron to be affinely Voronoi. We show that 
this condition holds for all five-dimensional Voronoi parallelohedra.
Consequently, the Voronoi conjecture in $\RR^5$ holds if and only if
every five-dimensional parallelohedron is combinatorially Voronoi.
Here, by saying that a parallelohedron $P$ is {\it combinatorially 
Voronoi}, we mean that the tiling $\mathcal T(P)$ by translates of
$P$ is combinatorially isomorphic to some tiling $\mathcal T(P')$,
where $P'$ is a Voronoi parallelohedron, and that the isomorphism
naturally induces a linear isomorphism of lattices $\Lambda(P)$ and
$\Lambda(P')$.

We also propose a new sufficient condition implying that
a parallelohedron is affinely Voronoi. The condition is
based on the new notion of the Venkov complex associated with a
parallelohedron.
\end{abstract}

\maketitle

\section{Introduction}

A convex $d$-dimensional polytope $P$ is called a {\it parallelohedron} if it tiles $\RR^d$ by translations only. Consider the family of all tilings 
by translates of a given parallelohedron $P$. A remarkable result
by Venkov~\cite{Venkov} and, independently, McMullen~\cite{McMullen}
asserts that this family necessarily contains a face-to-face tiling. 
Without loss of generality, we can impose a further requirement that
the face-to-face tiling contains $P$ as one of its tiles. We denote
such a tiling by $\mathcal T(P)$. 

Minkowski~\cite{Minkowski} established that all parallelohedra are
centrally symmetric. Consequently, the centers of all tiles of 
$\mathcal T(P)$ form a lattice provided the origin $\mathbf 0$ is the center of $P$. We denote this lattice by $\Lambda(P)$.
%From now on, with no loss of generality, we assume that
%the origin $\mathbf 0$ is the center of $P$. 
Then the set of tiles of
$\mathcal T(P)$ is exactly $\{ P + t \, \vert \, t \in \Lambda(P) \}$.

It is easy to check that parallelograms and centrally symmetric hexagons 
are parallelohedra for $d = 2$, and that there are no other 
two-dimensional parallelohedra. The full classification of parallohedra 
of a given dimension $d$ exists only for $d \leq 4$. The case $d = 3$ is
due to Fedorov \cite{fedorov-1885}, and the case $d = 4$ is due to 
Delaunay~\cite{Del} with a correction by Stogrin~\cite{Stogrin_triclinic}.

When speaking about classification of parallelohedra, it is
necessary to specify the equivalence relation used to split parallelohedra in equivalence classes. We introduce 
the notion of equivalence in the following definition, where
$\mathcal F(\mathcal T(P))$ denotes the poset of all faces of 
$\mathcal T(P)$ ordered by inclusion.

\begin{definition}\label{def:equiv}
Two $d$-dimensional parallelohedra, $P$ and $P'$, are {\it equivalent} if
there is an isomorphism of face posets 
$f : \mathcal F(\mathcal T(P)) \to \mathcal F(\mathcal T(P'))$
inducing a linear isomorphism of lattices 
$f_* : \Lambda(P) \to \Lambda(P')$ by restricting the action of $f$ 
to $d$-dimensional tiles and then passing to their centers.
\end{definition}

\begin{remark}
Equivalence (in the sense of Definition~\ref{def:equiv}) for Voronoi
parallelohedra reduces to the notion of an {\it L-type}.
More precisely, two Voronoi parallelohedra are equivalent if and only if
they belong to the same L-type. The concept of L-types originates
in the work of Voronoi~\cite{VoronoiII}. See 
also~\cite[Section 3]{ClassifDim5} for a modern treatment.
\end{remark}

Of course, if $P$ and $P'$ are equivalent as parallelohedra, then they
are combinatorially equivalent as convex polytopes. In addition, 
the equivalence classes also retain information about the 
lattice structure of the tiling, in particular, about central
symmetries preserving the lattice. However,
the authors are not aware of an example of two non-equivalent
parallelohedra that are combinatorially equivalent as convex polytopes.

One of the most famous conjectures in the theory of parallelohedra is
stated by Voronoi~\cite{VoronoiII} and reads as follows.
\begin{conjecture}
Every parallelohedron $P$ is affinely Voronoi.
\end{conjecture}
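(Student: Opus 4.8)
The statement is the Voronoi conjecture, and the natural route to it is Voronoi's own reformulation in terms of \emph{canonical scalings}. Recall that, by the theorem of Venkov and McMullen quoted above, $\mathcal T(P)$ is a face-to-face tiling, and by the Minkowski--Venkov conditions every $(d-2)$-dimensional face $G$ of $\mathcal T(P)$ is surrounded by a \emph{belt}: a cyclic sequence of $2k$ facets with $2k \in \{4,6\}$, each consecutive pair sharing $G$. Fixing outward unit normals $\mathbf n_F$ to the facets $F$, oriented consistently within each belt, a \emph{canonical scaling} is a positive function $s$ on the facets of $\mathcal T(P)$ such that, for every $(d-2)$-face $G$ with belt $F_1,\dots,F_{2k}$,
$$\sum_{i=1}^{2k} s(F_i)\,\mathbf n_{F_i} = \mathbf 0.$$
Voronoi's criterion asserts that such an $s$ exists if and only if $P$ is affinely Voronoi; the values $s(F)$ then encode the lattice vectors dual to the facets, from which the defining positive definite quadratic form is recovered. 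Thus the plan is to reduce the conjecture entirely to the construction of one canonical scaling on $\mathcal T(P)$.

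First I would establish \emph{local} existence. For a belt of length $4$ the normals come in two antipodal pairs, so the closing relation forces only that opposite facets carry equal weights, which is always solvable with positive values. For a belt of length $6$ the six normals lie in the $2$-plane orthogonal to $G$ and form a centrally symmetric hexagon; the closing relation is then a single homogeneous linear system whose solution space is one-dimensional and, by the convex position of the normals, is spanned by a strictly positive vector. Hence around every individual $(d-2)$-face a positive scaling exists and is unique up to a global scalar.

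The decisive step is to \emph{globalise} these local scalings into a single consistent function. I would fix a reference facet $F_0$, set $s(F_0)=1$, and propagate the value along chains of belts, each belt relation determining the remaining scalings of its facets from the others. Consistency of this propagation is equivalent to the vanishing of a holonomy: around every closed cycle in the dual adjacency structure the accumulated multiplicative factor must be trivial. Formally, the belt relations define a $1$-cocycle on the $2$-skeleton dual to $\mathcal T(P)$, with the belts serving as the $2$-cells that impose the relations, and a canonical scaling exists precisely when this cocycle is a coboundary, i.e.\ when the corresponding class in $H^1$ vanishes. I expect this global-consistency step to be the main obstacle: the local relations are always satisfiable, but ruling out nontrivial monodromy around every cycle is exactly the content that remains unproven in full generality.

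To discharge the obstruction I would exploit the combinatorial rigidity of parallelohedral tilings. In the \emph{primitive} case, where every $(d-2)$-face lies in a hexagonal belt and the dual cells are simplices, the belt relations overdetermine the scaling just enough to force triviality of the monodromy, recovering Voronoi's original theorem. For the general case one must control the higher-dimensional dual faces, whose boundaries furnish additional relations among belts and can be used to kill the remaining cycles; the sufficient condition of Garber--Gavrilyuk--Magazinov invoked in this paper is precisely a combinatorial certificate guaranteeing that these relations suffice to trivialise the holonomy. A complete unconditional proof would require showing that such a certificate always holds, and it is this global step --- rather than the local belt analysis --- that constitutes the genuine difficulty of the conjecture.
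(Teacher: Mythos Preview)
The statement you are attempting to prove is the Voronoi Conjecture itself, which the paper explicitly presents as an \emph{open} problem and does not claim to prove. The paper's results are conditional: Theorem~\ref{thm:main} shows that a five-dimensional parallelohedron is affinely Voronoi if and only if it is combinatorially Voronoi, and Theorem~\ref{thm:simp-condition} gives a new sufficient cohomological condition. There is therefore no ``paper's own proof'' of this statement to compare against.

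Your proposal is an accurate summary of the canonical-scaling reformulation going back to Voronoi, and your identification of the obstruction as a holonomy (equivalently, a class in $H^1$ of a suitable complex) is precisely the viewpoint the paper adopts via the Venkov complex $Ven(P)$ and Proposition~\ref{prop:unit_gain}. But you do not actually discharge that obstruction: your final paragraph concedes that ``a complete unconditional proof would require showing that such a certificate always holds,'' which is exactly the unresolved step. The local existence of belt scalings and the reduction to a global monodromy problem are well known; what is missing---both in your proposal and in the literature---is any argument that the monodromy vanishes for an \emph{arbitrary} parallelohedron. So your text is not a proof but a restatement of the problem in the language in which it is already known to be hard.
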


Here we call a $d$-dimensional parallelohedron {\it Voronoi} if
it is a Dirichlet-Voronoi polytope for some $d$-dimensional lattice,
and {\it affinely Voronoi} if it is an affine image of a Voronoi
parallelohedron. 

\begin{definition}
We say that a parallelohedron $P$ is {\it combinatorially Voronoi}, if there is a Voronoi parallelohedron $P'$ such that $P$ and $P'$ are equivalent in the sense of Definition \ref{def:equiv}.

Note that this equivalence is stronger than just combinatorial equivalence because we require for a linear isomorphism between the corresponding lattices be induced by combinatorial bijection.
\end{definition}

The Voronoi conjecture has been proved for some families of parallelohedra with certain combinatorial restrictions on their properties, see \cite{VoronoiII,Zhitomirskii,Ordine_thesis,GGM,Grish} for details, but still remains open in general. The conjecture was also checked for all 
$3$- and $4$-dimensional parallelohedra, but for $d\geq 5$ remains open 
as well.

Recently, Garber, Gavrilyuk and Magazinov~\cite{GGM} proposed a 
sufficient condition for a parallelohedron to be affinely Voronoi. 
In this paper we combine this condition with the complete classification 
five-dimensional Voronoi parallelohedra, due to 
Dutour Sikiri\'c, Garber, Sch\"urmann, and Waldmann \cite{ClassifDim5},
in order to prove the following main result.

\begin{theorem}\label{thm:main}
A 5-dimensional parallelohedron is affinely Voronoi if and
only if it is equivalent to a Voronoi parallelohedron.
\end{theorem}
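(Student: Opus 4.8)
The plan is to prove the two implications separately; the forward one is routine, and the reverse one carries all the weight. For ``affinely Voronoi $\Rightarrow$ equivalent to a Voronoi parallelohedron,'' suppose $P=A(P')$ for an affine map $A$ and a Voronoi parallelohedron $P'$. After translating so that the centers of $P'$ and of $P$ lie at the origin, we may take $A$ to be linear. Then $A$ carries $\mathcal T(P')$ to a face-to-face tiling by translates of $P$ containing $P$ as a tile; since such a tiling is unique, $A(\mathcal T(P'))=\mathcal T(P)$ and $A$ induces an isomorphism $f\colon\mathcal F(\mathcal T(P'))\to\mathcal F(\mathcal T(P))$ of face posets. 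Restricting $f$ to $d$-dimensional tiles and passing to their centers recovers $A|_{\Lambda(P')}\colon\Lambda(P')\to\Lambda(P)$, a linear isomorphism, so $P$ and $P'$ are equivalent in the sense of Definition~\ref{def:equiv}.

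For the reverse implication I would invoke the sufficient condition of Garber, Gavrilyuk and Magazinov~\cite{GGM}: there is a condition $(\star)$, formulated purely in terms of the combinatorics of the tiling $\mathcal T(P)$ together with the lattice $\Lambda(P)$ --- concretely, in terms of the dual cells of $\mathcal T(P)$ and the associated canonical-scaling/gain data --- such that $(\star)$ implies that $P$ is affinely Voronoi. The key point is that $(\star)$ is an invariant of the equivalence of Definition~\ref{def:equiv}, since an isomorphism $f$ as in that definition transports all of the relevant data from one tiling to the other. Hence it suffices to prove that $(\star)$ holds for \emph{every} five-dimensional Voronoi parallelohedron: then any $P$ equivalent to such a $P'$ inherits $(\star)$ and is therefore affinely Voronoi. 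Moreover, by the remark following Definition~\ref{def:equiv}, the equivalence classes of five-dimensional Voronoi parallelohedra are exactly the L-types of five-dimensional lattices, so it is enough to verify $(\star)$ on one representative of each L-type.

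The final step --- and the main obstacle --- is to run this verification over the complete list of L-types of five-dimensional lattices produced by Dutour Sikiri\'c, Garber, Sch\"urmann and Waldmann~\cite{ClassifDim5}. This list is finite but very large, so the check must be carried out by computer: for each L-type one reconstructs the parallelohedron $P'$, its face poset, and the dual cells needed to evaluate $(\star)$ from the data of \cite{ClassifDim5}, implements the test for $(\star)$ faithfully to its definition in \cite{GGM}, and confirms that it passes. The genuine difficulty lies in organising this computation reliably --- correctly generating the dual-cell data, implementing $(\star)$ without error, and making the outcome independently checkable; the underlying mathematics of the reduction is, by contrast, light. Once the verification is complete, the three steps combine to yield Theorem~\ref{thm:main}.
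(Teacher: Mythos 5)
Your proposal is correct and follows essentially the same route as the paper: the forward implication is the routine observation that an affine map transports the face-to-face tiling and lattice, and the reverse implication combines the combinatorial-invariance of the GGM sufficient condition with a computer verification over the classification of five-dimensional L-types from~\cite{ClassifDim5}. The paper makes the invariance precise by recasting the GGM condition in terms of the red Venkov graph (and also offers a second route via the Venkov complex), but this is exactly the reformulation your condition $(\star)$ gestures at.
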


Theorem~\ref{thm:main} essentially reduces the Voronoi conjecture in
5 dimensions to its weaker, combinatorial, version. 

Earlier, Garber~\cite{Garber_4dim} verified the condition of~\cite{GGM}
for all 4-dimensional parallelohedra.

Additionaly, we propose yet another sufficient condition, also depending
only on the equiavlence class, implying that a parallelohedron
is affinely Voronoi. This condition generalizes both~\cite{GGM} 
and~\cite{Ordine_thesis}.

The paper is organized as follows. 

In Section~\ref{sec:pisurface}
the key concepts and statements of the paper~\cite{GGM} are reproduced.

In Section~\ref{sec:simplicial} we introduce the {\it Venkov complex},
a 2-dimensional simplicial complex $Ven(P)$ associated with
the combinatorics of $P$. The notion of the Venkov complex is then 
used to formulate a sufficient condition 
(Theorem~\ref{thm:simp-condition}) for a parallelohedron to be affinely 
Voronoi. 

In Section~\ref{sec:graph} the condition of~\cite{GGM} is
reformulated in a discrete form, namely, in terms of the 1-skeleton of
$Ven(P)$  (or, equivalently, the {\it red Venkov graph} $VG_r(P)$). 

In Section~\ref{sec:computational} we describe the algorithm
to verify Theorem~\ref{thm:main} and provide the details of
implementation. 

In Section~\ref{sec:reduction} we show that 
the condition of Theorem~\ref{thm:simp-condition} implies both the 
conditions of~\cite{GGM} and~\cite{Ordine_thesis}. 

Finally, in Section~\ref{sec:conclusion} some open questions are proposed.

\section{The $\pi$-surface of a parallelohedron}\label{sec:pisurface}

We follow the exposition of \cite{GGM} in order to formulate the sufficient condition for Voronoi conjecture and refer to this paper for more details.

We fix a $d$-dimensional parallelohedron $P$. As before, let $\mathcal{T}(P)$ be the face-to-face tiling of $\RR^d$ with translations of $P$. Each 
$(d-2)$-face of the polytope $P$ can be attributed to one of two types
according to the following well-known proposition.

\begin{proposition}[See, for instance,~\cite{Venkov} or~\cite{McMullen}]
\label{prop:d-2-faces}
Each $(d-2)$-face $F$ of $P$ is incident either exactly to 3 
or exactly to 4 copies of $P$ in the tiling $\mathcal T(P)$.
\end{proposition}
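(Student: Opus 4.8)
Proving Proposition on $(d-2)$-faces: my planThe plan is to work locally around a fixed $(d-2)$-face $F$ of $P$ inside the tiling $\mathcal T(P)$. Since the tiling is face-to-face, the set of tiles containing $F$ is finite, and by translating we may assume $F$ is centered at a convenient point. The key object is the quotient picture: intersect the tiling with a $2$-dimensional affine plane $\Pi$ complementary to the affine hull of $F$ and passing through a relative interior point of $F$. Because $F$ has codimension $2$ and the tiling is face-to-face, a neighborhood of that point in $\Pi$ is tiled by the intersections of the tiles containing $F$; each such intersection is a $2$-dimensional convex cone (a wedge) with apex at the chosen point, and these wedges tile a neighborhood of the apex in $\Pi$, hence tile the full plane $\Pi$ by translation-invariance of the global tiling around translates of $F$.

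Next I would analyze this planar local configuration. We have finitely many convex wedges with a common apex whose angles sum to $2\pi$, so the number $k$ of tiles incident to $F$ equals the number of wedges. The crucial extra constraint comes from central symmetry of $P$ (Minkowski's theorem, cited in the excerpt): each tile is centrally symmetric, so each wedge at the apex has an "opposite" wedge belonging to the centrally symmetric copy of the same tile across $F$, and more importantly the edges emanating from the apex in $\Pi$ come in antipodal pairs. This forces the cyclic sequence of wedges around the apex to be centrally symmetric, so $k$ is even only if... — more precisely, one shows each wedge angle is at most $\pi$ and the antipodal structure together with the fact that two adjacent wedges across an edge come from tiles sharing a facet pins down the possibilities. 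The standard argument (going back to Venkov and McMullen) shows the only combinatorial types of such a centrally symmetric fan are: three wedges of angle summing to $2\pi$ with the "belt" condition, or four wedges forming two pairs of antipodal wedges. Hence $k \in \{3,4\}$.

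Concretely, the key step I would isolate is the following. Let the rays from the apex in $\Pi$, in cyclic order, be $r_1,\dots,r_k$, bounding wedges $W_1,\dots,W_k$ with $W_i$ between $r_i$ and $r_{i+1}$. Each $r_i$ is the trace of a $(d-1)$-face (facet) of some tile, shared by the two tiles giving $W_{i-1}$ and $W_i$. Central symmetry of each tile about its center, combined with the fact that the tiling is a lattice translate tiling, implies that the ray $-r_i$ (antipode through the apex) is also among the $r_j$; thus the rays form a centrally symmetric set and $k$ is even, OR one ray passes through the apex to the interior of a wedge — but that is impossible since wedges are the maximal faces. Wait: the antipode of a ray could coincide with the apex configuration only if $k$ is even. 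If $k$ is even, say $k = 2m$, then antipodal rays pair up; if $m \geq 3$ I would derive a contradiction with central symmetry of the individual parallelohedron $P$ by a belt/monotonicity argument (the "belt of length $\geq 6$" cannot close up under the lattice translations forming the tiling). If $k$ is odd, the antipode of each ray is not a ray, forcing each wedge to be mapped by $x \mapsto -x$ onto a union of wedges, and a short case analysis gives $k = 3$.

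The main obstacle I expect is making the central-symmetry / belt argument rigorous rather than hand-wavy: one must carefully use that the tiles around $F$ are lattice translates of $P$, that each is centrally symmetric, and that consecutive tiles share a facet, to conclude that the only closed centrally symmetric cycles of wedges have length $3$ or $4$. This is exactly the content of the cited Venkov–McMullen analysis, and I would either reproduce their short argument or simply invoke \cite{Venkov} and \cite{McMullen} as the excerpt already does, since the proposition is stated as well-known. In a self-contained treatment, the cleanest route is: reduce to the planar fan, use that the relevant "belt vector" relations force the fan to have a $2$-dimensional or essentially $1$-dimensional lineality-type structure, and conclude. I would not belabor the routine verification that the intersection with $\Pi$ really is a local tiling by wedges — that is immediate from face-to-faceness and dimension count.
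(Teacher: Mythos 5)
The paper does not actually prove this proposition; it is quoted as a classical fact with a pointer to Venkov and McMullen, so your closing fallback of simply invoking those references is exactly what the authors do. Your reduction to the transverse $2$-plane $\Pi$ and the resulting fan of $k$ convex wedges with angles summing to $2\pi$, each angle strictly less than $\pi$ because $F$ is a genuine $(d-2)$-face of every incident tile in a face-to-face tiling, is the correct and standard first step and yields $k\ge 3$.

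The genuine gap is in the upper bound $k\le 4$, which is the whole content of the statement. Your key mechanism --- that central symmetry of the tiles forces the set of rays $r_1,\dots,r_k$ to be invariant under the antipodal map of $\Pi$ about the apex --- is false, and no justification for it exists: each tile $P+t_i$ is symmetric about its own center $c_i$, which is not the apex, so that symmetry carries the wedge of $P+t_i$ at $F$ to its wedge at the \emph{different} $(d-2)$-face $-F+2c_i$ and says nothing about the fan around $F$ itself. (For $k=3$ the three rays are visibly not an antipodal set; for $k=4$ the antipodal pairing of rays is a consequence of the classification, not an input to it.) Your other lever, ``a belt of length $\ge 6$ cannot close up,'' is both wrong and circular: belts of length exactly $6$ do occur and correspond precisely to the primitive case $k=3$, and the assertion that every belt has length $4$ or $6$ is equivalent to the proposition being proved. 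The actual Venkov--McMullen argument is global along the belt rather than local at $F$: projecting $P$ along $\mathrm{lin}(F)$ gives a centrally symmetric $2m$-gon whose edges are the images of the belt facets and whose interior angles are the wedge angles of $P$ at the $2m$ parallel $(d-2)$-faces; these angles sum to $(2m-2)\pi$, while around each such face of the tiling at least three wedges, all of which are wedges of translates of $P$ at faces of the \emph{same} belt, must sum to $2\pi$. Balancing these two counts is, in essence, how one rules out $k\ge 5$. If you want a self-contained proof you need this double count over the entire belt; otherwise citing the references, as the paper does, is the right call.
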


\begin{remark}
Proposition~\ref{prop:d-2-faces} is equivalent to the so-called
Minkowski--Venkov condition on belts of parallelohedra.
\end{remark}

\begin{definition}
If a $(d-2)$-face $F$ is incident exactly to three copies of $P$ in 
$\mathcal{T}(P)$, then $F$ is called {\em primitive}. Otherwise 
$F$ is called {\em non-primitive}.
\end{definition}

With a tiling $\mathcal T(P)$ we associate the {\it dual complex} 
$\mathcal D(P)$ according to the following definition.

\begin{definition}
Let $G$ be a face of $\mathcal T(P)$ of an arbitrary dimension 
$0 \leq k \leq d$. Denote by $D(G)$ the {\it dual $(d - k)$-cell}
of $G$, which is, by definition, the centers of all translates
of $P$ in $\mathcal T(P)$ that contain $G$ as a face. The poset
of all dual cells for the faces of $\mathcal T(P)$ with ordering
by inclusion is called the {\it dual complex} of $\mathcal T(P)$
and denoted by $\mathcal D(P)$.
\end{definition}

\begin{remark}
$\mathcal D(P)$ is dual to the poset of faces of $\mathcal T(P)$,
since the map $G \mapsto D(G)$ inverts the relation of inclusion. 
\end{remark}

\begin{remark}
We equip each dual cell $D(G)$ with a face structure by considering
its subcells, i.e., all dual cells $D(G') \subseteq D(G)$, where
$G' \supseteq G$. Therefore we are able to talk about the combinatorics
of $D(G)$.
\end{remark}

A classification of possible combinatorial types of dual 3-cells
is available due to a classical result of Delaunay.

\begin{proposition}[See~\cite{Del}]
Let $G$ be a $(d - 3)$-dimensional face of $\mathcal T(P)$. Then
\begin{enumerate}
\item The dual 3-cell $D(G)$ is combinatorially equivalent to one of the
following five 3-polytopes: a tetrahedron, an octahedron (a crosspolytope),
a quadrangular pyramid, a triangular prism or a cube.
\item The combinatorics of $D(G)$ coincides with the combinatorics
of the polytope $\mathrm{conv}\, D(G)$.
\item All quadrangular faces of $conv\, D(G)$ are parallelograms. 
\end{enumerate}
\end{proposition}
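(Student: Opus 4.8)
The plan is to reduce everything to a local analysis at a point of $\relint G$ and then to a finite combinatorial enumeration, in the spirit of Delaunay's classical argument. Fix $x_0 \in \relint G$ and write $\RR^d = L \oplus N$ with $L = \mathrm{lin}(G)$ and $\dim N = 3$; let $\pi \colon \RR^d \to N$ be the projection along $L$. In a neighbourhood of $\pi(x_0)$ the tiles of $\mathcal T(P)$ containing $G$ project to full-dimensional convex polyhedral cones that tile $N$ face-to-face; call this fan $\mathcal F$. Its full-dimensional cones, its $2$-faces and its rays correspond respectively to the vertices, edges and $2$-faces of $D(G)$, so $\partial D(G)$ is combinatorially the polytopal $2$-sphere dual to $\mathcal F$, hence (by Steinitz's theorem) the boundary of a convex $3$-polytope. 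To obtain part~(2) I would upgrade this to a statement about the actual tile centres: that the points $\{t : P+t \supseteq G\}$ are in convex position and that $\mathrm{conv}$ of them has exactly this face lattice. I would extract this from the point-reflection structure of the tiling --- any two tiles sharing a facet $F' \supseteq G$ are images of one another under the central symmetry about the centre of $F'$ --- by checking that the dihedral of $D(G)$ along each edge $D(F')$ opens outward, equivalently that every centre $t$ with $P+t \supseteq G$ admits a hyperplane supporting all of the remaining centres.

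For part~(3), let $F \supseteq G$ be a $(d-2)$-face, so that $D(F)$ has $3$ or $4$ vertices by Proposition~\ref{prop:d-2-faces}. Projecting $P$ along $\mathrm{lin}(F)$ onto the normal $2$-plane $N_F$ yields a centrally symmetric polygon $\pi_{N_F}(P)$ whose number of edges equals the length of the belt of $F$ in $P$, namely $6$ if $F$ is primitive and $4$ if $F$ is non-primitive, by the Minkowski--Venkov condition. Near the common image point, the tiles around $F$ become exactly the translates of $\pi_{N_F}(P)$ surrounding that point: three translates of a hexagon, whose centres form a nondegenerate triangle, or four translates of a parallelogram, whose centres form a parallelogram. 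Lifting this back to $\RR^d$ --- using again that consecutive tiles around $F$ are central reflections of one another through their shared facet centre, together with the fact that in the non-primitive case the two opposite pairs of belt facets of $F$ are parallel --- I would conclude that $D(F)$ is a nondegenerate triangle when $F$ is primitive and a parallelogram when $F$ is non-primitive. In particular every $2$-cell occurring on $\partial D(G)$ is a nondegenerate convex polygon, which is exactly what the convexity argument of the previous paragraph requires as input.

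For part~(1), after the first two steps $D(G)$ is a convex $3$-polytope each of whose $2$-faces is a triangle or a parallelogram. One further input --- part of the local analysis of the convex fan $\mathcal F$, and one of the places where the full parallelohedral structure, not merely Proposition~\ref{prop:d-2-faces}, is used --- is that every vertex of $D(G)$ has degree $3$ or $4$, equivalently that every $(d-3)$-face of $P$ lies in at most $4$ facets. Granting this degree bound and the triangle/quadrilateral bound on $2$-faces, Euler's relation $\sum_v (6 - \deg v) + \sum_f (6 - 2\,\#\mathrm{sides}(f)) = 12$ together with the incidence identities $2E = 3f_3 + 4f_4 = 3v_3 + 4v_4$ (where $f_3, f_4$ count triangular and quadrilateral $2$-faces and $v_3, v_4$ count degree-$3$ and degree-$4$ vertices) restrict the face vector of $D(G)$ to a short list. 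Discarding the entries on that list that cannot be realised by a convex $3$-polytope with triangle and parallelogram faces, and --- the decisive step --- by a face-to-face fan of convex cones (for instance, the triangular bipyramid is excluded because its two apices would be simplicial cones sharing the same three neighbours, and the complement of a simplicial cone in $\RR^3$ cannot be tiled face-to-face by those three cones and one more), one is left with precisely the tetrahedron, the octahedron, the quadrangular pyramid, the triangular prism and the cube.

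I expect the real work to lie in part~(2) and in the endgame of part~(1). Part~(2) genuinely uses the global structure of the parallelohedral tiling --- the corresponding convexity statement fails for general face-to-face tilings by convex polytopes --- so the supporting-hyperplane/outward-dihedral argument has to combine central symmetry of $P$ with Proposition~\ref{prop:d-2-faces}, and not just combinatorics; and in part~(1) the subtle point is not the Euler bookkeeping but ruling out the Euler-admissible yet geometrically impossible candidates (the bipyramids and their relatives), which forces one back to the convex-fan picture. By contrast, the triangle/parallelogram dichotomy of part~(3) should be fairly self-contained once the Minkowski--Venkov belt condition is in hand.
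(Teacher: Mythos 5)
The paper does not actually prove this proposition: it is quoted verbatim as a classical theorem of Delaunay with the citation \cite{Del}, so there is no in-paper argument to compare yours against. Judged on its own, your proposal correctly identifies the standard ingredients, and part~(3) is essentially right and complete as sketched: projecting along $\mathrm{lin}(F)$ and invoking the belt condition of Proposition~\ref{prop:d-2-faces} does give that $D(F)$ is a nondegenerate triangle (three hexagons around a point) or a parallelogram (four tiles $P$, $P+a$, $P+b$, $P+a+b$, using that the four belt facets come in two parallel pairs). That part I would accept.

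The other two parts, however, have genuine gaps at exactly the places where Delaunay's theorem is hard. First, part~(2) --- that the centres are in convex position and that the poset of dual cells coincides with the face lattice of $\tconv D(G)$ --- is the crux, and your treatment of it is a restatement of the goal rather than an argument: ``checking that every dihedral opens outward'' \emph{is} the claim of convex position, and you give no mechanism (beyond naming central symmetry) by which the point-reflection structure forces it; note that Steinitz's theorem only tells you the combinatorial sphere is \emph{abstractly} polytopal, not that these particular lattice points realize it. Second, the degree bound --- every vertex of $D(G)$ has degree $3$ or $4$, i.e.\ a $(d-3)$-face of a tile lies in at most $4$ facets of that tile --- is flagged as ``one further input'' but never derived, and it does not follow from Proposition~\ref{prop:d-2-faces}: a $3$-dimensional tangent cone can a priori have any number of facets, so this needs a real argument. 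Third, even granting both the degree bound and the triangle/parallelogram dichotomy, the Euler bookkeeping admits many face vectors beyond the five sorts (the triangular bipyramid, the square antiprism with parallelogram caps, polytopes with $f$-vector $(6,10,6)$ or $(7,12,7)$, etc.), and the ``decisive step'' of excluding them is illustrated only by a one-line remark about the bipyramid that is itself not a proof (the complement of a simplicial cone certainly \emph{can} be partitioned into four convex cones in general, so the stated impossibility needs justification). In short, the outline is sensible but defers essentially all of the substance of Delaunay's classification; as a proof it is incomplete.
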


We proceed by recalling the definitions of the $\delta$- and $\pi$-surfaces
associated with a parallelohedron.

\begin{definition}
Let $P_\delta$, the {\em $\delta$-surface of $P$}, be the manifold 
obtained from $\partial P$, the surface of $P$, by removing all closed 
non-primitive $(d-2)$-faces. Also let the 
{\em $\pi$-surface of $P$} be defined by 
$P_\pi := P_{\delta} /(x \sim -x)$, i.e.,
$P_{\pi}$ is a quotient of $P_{\delta}$ obtained by identifying 
opposite points.
\end{definition}

\begin{definition}
Let $F$ and $G$ be two facets of $P$ such that the intersection 
$F\cap G$ is a primitive $(d - 2)$-face. Define the {\em gain function} 
$g(F,G)$ as follows. It is said that $g(F, G) = g$ if there exists
a triple $(a, b, c)$ of positive numbers such that
\begin{equation*}
g = \tfrac{b}{a} \qquad \text{and} \qquad
a\mathbf e_{P, P_F} +b \mathbf e_{P_G, P}+ c\mathbf e_{P_F, P_G}=0,
\end{equation*} 
where $P_F$ and $P_G$ are the tiles of $\mathcal T(P)$ that are adjacent
to $P$ by the facets $F$ and $G$ respectively, and $\mathbf e_{P_1, P_2}$
denotes the unit normal to a $(d - 1)$-face $P_1 \cap P_2$ directed from
$P_1$ towards $P_2$.
\end{definition}

\begin{remark}
There is a unique linear dependence between 
$\mathbf e_{P, P_F}$, $\mathbf e_{P_G, P}$ and  $\mathbf e_{P_F, P_G}$
up to a common positive multiplier,
moreover, the coefficients of the dependence have equal signs.  Hence 
$g(F, G)$ is defined in a consistent and unique way. 
\end{remark}

\begin{definition}
A continuous piecewise linear map $\gamma : [0, 1] \to \partial P$ 
is called a {\em path}
on $\partial P$. We call $\gamma$ a {\em generic path} if
\begin{itemize}
\item The points $\gamma(0)$ and $\gamma(1)$ are interior points of
some facets of $P$.
\item No point $\gamma(t)$, $t \in [0, 1]$, belongs to any of the 
$(d - 3)$-faces of $P$.
\item Whenever $\gamma$ intersects some $(d - 2)$-face of $P$,
the intersection is transversal.
\end{itemize}
If $\gamma([0, 1]) \subset P_{\delta}$, we call $\gamma$ a 
{\em primitive path}. 
\end{definition}

Given a generic primitive path $\gamma$ on $\partial P$, let 
\begin{equation*}
\langle \gamma \rangle := [F_0, F_1, \ldots, F_k], 
\end{equation*}
where $[F_0, F_1, \ldots, F_k]$ is the sequence of facets visited by 
$\gamma$. More precisely, the sequence $[F_0, F_1, \ldots, F_k]$ 
satisfies the following description: the path $\gamma$ starts on 
the facet $F_0$, then goes to $F_1$ crossing exactly one primitive face of codimension $2$, \ldots, and, finally, ends on $F_k$.
Define
\begin{equation*}
g(\gamma) := g(\langle \gamma \rangle) = g([F_0, F_1, \ldots, F_k]) := 
\prod\limits_{i = 1}^k g(F_i, F_{i - 1}).
\end{equation*}

Consider an arbitrary path (continuous piecewise linear map) 
$\gamma_{\pi} : [0, 1] \to P_{\pi}$. Since $P_{\delta}$ is a twofold
cover of $P_{\pi}$, there are two ways to lift $\gamma_{\pi}$ onto
$P_{\delta}$. If 
$\gamma_{\delta, 1}, \gamma_{\delta, 2}: [0, 1] \to P_{\delta}$ are the 
paths obtained by such a lift, then both $\gamma_{\delta, 1}$ and 
$\gamma_{\delta, 2}$ are primitive. If $\gamma_{\delta, 1}$ or
$\gamma_{\delta, 2}$ is generic, then so is the other.
In this case we call $\gamma_{\pi}$ {\em generic} as well. Moreover,
for generic $\gamma_{\pi}$ we have
\begin{equation}\label{eq:sympaths}
g(\gamma_{\delta, 1}) = g(\gamma_{\delta, 2}).
\end{equation}
Indeed, if $\langle \gamma_{\delta, 1} \rangle = [F_0, F_1, \ldots, F_k]$,
then $\langle \gamma_{\delta, 2} \rangle = [-F_0, -F_1, \ldots, -F_k]$, 
and the identity~\eqref{eq:sympaths} follows from the fact that
$g(F_i, F_{i - 1}) = g(-F_i, -F_{i - 1})$. Thus it is natural to put 
$g(\gamma_{\pi}) := g(\gamma_{\delta, 1})$. 

The following criterion holds.

\begin{proposition}[See~\cite{GGM}, Lemma~2.6 and Theorem~4.6]
\label{prop:unit_gain}
The following conditions are equivalent for a parallelohedron $P$.
\begin{enumerate}
\item $P$ is affinely Voronoi.
\item For every generic path $\gamma$ on $P_{\pi}$ which is closed, i.e., 
$\gamma(0) = \gamma(1)$, it holds that $g(\gamma)=1$.
\end{enumerate}
\end{proposition}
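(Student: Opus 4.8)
\medskip
\noindent\textbf{Proof proposal.}
The plan is to deduce both implications from Voronoi's classical criterion in terms of \emph{canonical scalings}. Recall that a canonical scaling of $\mathcal{T}(P)$ is a positive, translation-invariant function $n$ on the facets of the tiling that is invariant under the central symmetry of every tile and such that the scaled unit normals close up around each $(d-2)$-face: at a primitive $(d-2)$-face with surrounding tiles $Q_0,Q_1,Q_2$ one requires a relation $n_0\mathbf e_{Q_0,Q_1}+n_1\mathbf e_{Q_1,Q_2}+n_2\mathbf e_{Q_2,Q_0}=0$, and at a non-primitive $(d-2)$-face the analogous four-term relation for the four surrounding tiles. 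By Voronoi's theorem~\cite{VoronoiII}, $P$ is affinely Voronoi if and only if $\mathcal{T}(P)$ admits a canonical scaling. Hence it suffices to prove that a canonical scaling exists if and only if $g(\gamma)=1$ for every closed generic path $\gamma$ on $P_\pi$. Granting Voronoi's theorem, the only non-formal step below is the input from Delaunay's classification of dual cells; the rest is bookkeeping.

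For (2)$\Rightarrow$(1) I would construct a canonical scaling directly. By translation invariance such a scaling descends to a positive function, still denoted $n$, on the pairs $\{F,-F\}$ of opposite facets of $P$, and the gain function is defined precisely so that the primitive closing relation across a primitive $(d-2)$-face $F_{i-1}\cap F_i$ of $P$ is equivalent to $n(F_i)=g(F_i,F_{i-1})^{-1}\,n(F_{i-1})$. Choosing a reference facet $F_\ast$ with $n(F_\ast):=1$ and putting $n(F):=g(\gamma)$ for a primitive path $\gamma$ from $F$ to $F_\ast$, one obtains a well-defined positive assignment exactly when the product of gains around every closed primitive loop is $1$; the passage from $P_\delta$ to $P_\pi$ is the bookkeeping that builds in the central-symmetry constraint $n(F)=n(-F)$, and it is legitimate because the excerpt records $g(F_i,F_{i-1})=g(-F_i,-F_{i-1})$, so the two lifts of a loop on $P_\pi$ carry equal gain. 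If the graph whose vertices are the facet pairs of $P$ and whose edges are its primitive $(d-2)$-faces is disconnected, each component may be rescaled by an independent positive scalar; any such choice will do, as the next step shows.

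It remains to check that $n$, so constructed, also satisfies the closing relation at every \emph{non-primitive} $(d-2)$-face $F$; this is where Delaunay's classification of dual cells recalled above (cf.~\cite{Del}) is used, and it is the step I expect to carry the geometric content. By that classification the dual cell of a non-primitive $F$ is a parallelogram, so the two pairs of ``opposite'' facets surrounding $F$ are parallel; since a parallelohedron has (at most) two facets in each parallel class, namely $F'$ and $-F'$, each of these facets is a translate of one and the same pair $\{F',-F'\}$ of facets of $P$. Consequently the central-symmetry constraint forces $n$ to take equal values on opposite facets around $F$, and — the corresponding unit normals being antiparallel in pairs along the $4$-cycle of tiles — the four-term relation degenerates to the trivially valid identity $n_1\mathbf e+n_2\mathbf f-n_1\mathbf e-n_2\mathbf f=0$. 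Thus $n$ is a genuine canonical scaling, and Voronoi's theorem yields (1).

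For (1)$\Rightarrow$(2) I would first observe that, for a \emph{closed} generic path, the value $g(\gamma)$ is an affine invariant: under $x\mapsto Ax$ the unit normals become the normalizations of $(A^{-1})^{\top}\mathbf e$, so each factor $g(F_i,F_{i-1})$ acquires the factor $\lVert(A^{-1})^{\top}\mathbf e_{F_{i-1}}\rVert\big/\lVert(A^{-1})^{\top}\mathbf e_{F_i}\rVert$, and these factors telescope to $1$ around a closed loop (the lift of a closed $P_\pi$-loop returns to $F_0$ or to $-F_0$, both of which give the same norm). Hence we may assume $P$ is itself a Dirichlet--Voronoi cell of a lattice $\Lambda$ for a positive quadratic form $\phi$; then $n(\mathcal F):=\phi(v_{\mathcal F})$, with $v_{\mathcal F}\in\Lambda$ the vector perpendicular to the facet $\mathcal F$, is a canonical scaling, and running the computation of the previous two paragraphs backwards gives $g(\gamma)=n(F_0)/n(F_0)=1$ for every closed primitive loop. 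A routine transversality perturbation, which does not change the gain, reduces an arbitrary closed path on $P_\pi$ to a generic one, completing the equivalence.
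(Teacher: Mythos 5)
The paper does not actually prove Proposition~\ref{prop:unit_gain}; it imports it from~\cite{GGM} (Lemma~2.6 and Theorem~4.6), and your canonical-scaling argument is essentially the proof given in that reference: the gain encodes the coefficient ratios in the three-term closing relation at a primitive $(d-2)$-face, unit gain around every closed generic loop on $P_\pi$ is exactly what makes the propagated weighting well defined on each component of the facet-adjacency graph and consistent with $n(F)=n(-F)$, the four-term relations at non-primitive $(d-2)$-faces hold automatically for any translation-invariant, centrally symmetric positive weighting because the four surrounding facets form two antiparallel translate pairs, and the converse direction follows from the explicit scaling of a Dirichlet--Voronoi cell together with the telescoping affine invariance of gains along closed loops. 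The one point to tighten is the attribution of your granted input: the implication ``canonical scaling $\Rightarrow$ affinely Voronoi'' for arbitrary (not necessarily primitive) parallelohedra is not literally in~\cite{VoronoiII}, which treats the primitive case; the general statement is the generalized generatrix construction established in~\cite{Ordine_thesis} and~\cite{GGM}, which is precisely why the paper cites~\cite{GGM} for this proposition.
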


It will be useful to list some particular cases of closed curves
$\gamma$ on $P_{\pi}$ with $g(\gamma) = 1$.

\begin{lemma}\label{lem:basic-circuits}
Let $\gamma$ be a generic closed path on $P_{\pi}$. Assume that
$\gamma$ has a lift $\gamma_{\delta}$ onto $P_{\delta}$ satisfying
any of the conditions (HB), (TC) or (O) below. Then $g(\gamma)=1$.
\begin{enumerate}
\item[(HB)] $\langle \gamma_{\delta} \rangle = [F_1, F_2, F_3, -F_1]$,
where the facets $F_1$, $F_2$ and $F_3$ are parallel to some primitive
$(d - 2)$-face $G$ of $P$.
\item[(TC)] $\langle \gamma_{\delta} \rangle = [F_1, \ldots, F_k, F_1]$,
where all facets $F_1, \ldots, F_k$ are distinct and share a common
$(d - 3)$-face $G$ of $P$.
\item[(O)] $\langle \gamma_{\delta} \rangle = [F_1, F_2, F_3, F_1]$, where
\begin{equation*}
F_1 = P \cap (P + \mathbf x_2 - \mathbf x_3), \quad
F_2 = P \cap (P + \mathbf x_1), \quad
F_3 = P \cap (P + \mathbf x_2),
\end{equation*}
and $\{\mathbf 0, \mathbf x_1, \mathbf x_2, \mathbf x_3, 
\mathbf x_1 + \mathbf x_3 - \mathbf x_2 \}$ is the vertex set of a
pyramidal dual 3-cell $D(G)$ with apex $\mathbf 0$.
\end{enumerate}
\end{lemma}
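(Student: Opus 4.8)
The plan is to treat the three cases (HB), (TC), (O) separately, in each case producing an explicit triple of positive coefficients witnessing the required linear dependence among unit normals and then multiplying the gains around the cycle. In all three cases the key point is that the relevant facets and their outer normals interact inside a single low-dimensional dual cell (a belt in cases (HB) and (TC), a pyramidal $3$-cell in case (O)), so the computation reduces to a two- or three-dimensional one. I would first record the elementary identity $g(F,G)\,g(G,F)=1$, which follows directly from the definition since the linear dependence $a\mathbf e_{P,P_F}+b\mathbf e_{P_G,P}+c\mathbf e_{P_F,P_G}=0$ is symmetric under swapping the roles of $F$ and $G$ (swapping $a\leftrightarrow b$ and reversing the sign of the last normal), and I would also note $g(F_i,F_{i-1})=g(-F_i,-F_{i-1})$, already used in the excerpt, so that all computations may be carried out on $P_\delta$ for a chosen lift.

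For case (HB): since $F_1,F_2,F_3$ are all parallel to the primitive $(d-2)$-face $G$, the three primitive $(d-2)$-faces crossed by $\gamma_\delta$ lie in the belt of $G$, a hexagonal belt of $6$ facets $\{\pm F_1,\pm F_2,\pm F_3\}$ whose normals all lie in the $2$-plane orthogonal to $G$. In that plane the six unit normals are the successive outer normals of a centrally symmetric hexagon, so I would project everything to this plane and compute $g(F_2,F_1)$, $g(F_3,F_2)$ and $g(-F_1,F_3)$ as ratios of the (cyclically arranged) angles/lengths; the telescoping product of the three ratios around the closed hexagon is $1$. Concretely, writing the consecutive edge normals of the hexagon as $\mathbf n_1,\dots,\mathbf n_6=-\mathbf n_3$ and using that each $g(F_{i+1},F_i)$ equals the ratio of two of the triangle-closure coefficients, the product collapses; this is essentially the statement that a hexagonal belt closes up.

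For case (TC): the facets $F_1,\dots,F_k$ share the $(d-3)$-face $G$, so $D(G)$ is one of the five Delaunay dual $3$-cells, and the normals $\mathbf e_{P,P_{F_i}}$ together with the "interior" normals $\mathbf e_{P_{F_i},P_{F_{i+1}}}$ all lie in the $3$-plane orthogonal to $G$; moreover by item (3) of Delaunay's proposition the relevant quadrilateral faces of $\operatorname{conv}D(G)$ are parallelograms, which forces the successive gains to multiply to $1$ around the boundary cycle of facets of that $3$-cell — this is the "tile circuit" identity, and I would verify it by the same telescoping argument applied to the cyclic sequence of edges of a $2$-dimensional section (the link of $G$), using the parallelogram condition to pair up equal ratios. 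For case (O), the pyramidal dual cell $D(G)$ with apex $\mathbf 0$ and vertex set $\{\mathbf 0,\mathbf x_1,\mathbf x_2,\mathbf x_3,\mathbf x_1+\mathbf x_3-\mathbf x_2\}$ is a square pyramid whose square base $\mathbf x_1,\mathbf x_2,\mathbf x_3,\mathbf x_1+\mathbf x_3-\mathbf x_2$ is a parallelogram; the three facets $F_1,F_2,F_3$ correspond to three triangular faces of this pyramid, and I would write down the three triangle-closure relations (one per crossed primitive $(d-2)$-face) explicitly in terms of the vectors $\mathbf x_1,\mathbf x_2,\mathbf x_3$, using that $F_2\cap F_3$, $F_3\cap F_1$, $F_1\cap F_2$ are the primitive faces crossed, and check that the product $g(F_2,F_1)g(F_3,F_2)g(F_1,F_3)$ is $1$ because the parallelogram relation $\mathbf x_1+\mathbf x_3=\mathbf x_2+(\mathbf x_1+\mathbf x_3-\mathbf x_2)$ makes the three ratios cancel.

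The main obstacle is bookkeeping rather than anything deep: correctly identifying, in each of the three cases, which primitive $(d-2)$-faces are crossed by the specified lift $\gamma_\delta$ and which three tiles appear in each triangle-closure relation, and then keeping the orientations of the unit normals $\mathbf e_{P_1,P_2}$ consistent so that all the coefficients come out positive and the telescoping is valid. I expect case (O) to require the most care, since there the combinatorics of the square pyramid must be matched precisely to the three facets $F_1,F_2,F_3$ given by the translation vectors, but once the correct correspondence is set up the cancellation is immediate from the parallelogram identity. (Alternatively, cases (HB) and (TC) can be quoted almost verbatim from~\cite{GGM}, where closed curves of these two types are already shown to have unit gain; only (O) genuinely needs the pyramidal-cell computation.)
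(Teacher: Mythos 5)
Your treatment of (HB) and (TC) is fine: the paper simply cites \cite{GGM} (Lemmas~4.5 and~3.6 respectively) for these two cases, which is exactly your fallback option, and your sketch of the hexagonal-belt telescoping for (HB) is the right picture. The substantive content of the lemma is case (O), which the paper proves in full, and here your argument has a genuine gap.

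You propose to write down the three triangle-closure relations (one at each of $F_1\cap F_2$, $F_2\cap F_3$, $F_3\cap F_1$) and claim that the product of the three gains is $1$ ``because the parallelogram relation makes the three ratios cancel.'' This does not work as stated. Each gain $g(F_i,F_j)$ is a ratio of two coefficients \emph{within a single} linear dependence among three unit normals, and each of the three dependences is normalized only up to its own positive scalar. When you form the product $g(F_1,F_2)\,g(F_2,F_3)\,g(F_3,F_1)$, the quantity you need to cancel in passing from one factor to the next is the coefficient of a shared normal direction (e.g.\ the normal to $F_2$ appears in the dependences at $F_1\cap F_2$ and at $F_2\cap F_3$), and these two occurrences carry \emph{independent} normalizations; the mismatch accumulated around the cycle is precisely $g(\gamma)$, so asserting cancellation is assuming the conclusion. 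The parallelogram identity $\mathbf x_1+\mathbf x_3=\mathbf x_2+\mathbf x_4$ only gives you parallelism of certain facets (hence collinearity of certain normals); it does not by itself link the normalizations of the three separate relations. The missing ingredient, which the paper supplies, is the local canonical scaling on the star of the $(d-3)$-face $G$: by \cite[Lemma~3.7]{GGM} there exist affine functions $U_0,\dots,U_4$, one per tile incident to $G$, agreeing exactly on shared facets, and setting $\mathbf a_{ij}:=\mathrm{grad}\,U_j-\mathrm{grad}\,U_i$ produces a single globally consistent system of (non-unit) normal vectors satisfying all closure relations $\mathbf a_{ij}+\mathbf a_{jk}=\mathbf a_{ik}$ simultaneously. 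Only then can each gain be written as a ratio of lengths $|\mathbf a_{\cdot}|/|\mathbf a_{\cdot}|$ in a common normalization, the product telescopes to $|\mathbf a_{23}|/|\mathbf a_{14}|$, and the parallelogram structure of the pyramid's base (giving $\mathbf a_{23}=-\mathbf a_{14}$) finishes the argument. Also note a smaller slip: $F_1=P\cap(P+\mathbf x_2-\mathbf x_3)$ does not contain $G$ and does not correspond to a lateral face of the pyramid through the apex $\mathbf 0$; it is only \emph{parallel} to the facet $(P+\mathbf x_1)\cap(P+\mathbf x_4)$, which is why the $(d-2)$-faces $F_1\cap F_2$ and $F_3\cap F_1$ do not contain $G$ and the circuit is not trivially contractible. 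Without identifying the canonical-scaling mechanism, your case (O) does not close.
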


Before we proceed with a proof, let us give a name to each type of closed
paths mentioned in Lemma~\ref{lem:basic-circuits}.

\begin{definition}
In the notation of Lemma~\ref{lem:basic-circuits}:
\begin{enumerate}
\item If the condition (HB) is satisfied, then $\gamma$ is called a
{\em half-belt circuit}.
\item If the condition (TC) is satisfied, then $\gamma$ is called a
{\em trivially contractible circuit}.
\item If the condition (O) is satisfied, then $\gamma$ is called an
{\em Ordine circuit}.
\end{enumerate}
\end{definition}

\begin{proof}[Proof of Lemma~\ref{lem:basic-circuits}]
Consider each condition separately.

\noindent {\bf Case (HB).} See~\cite[Lemma~4.5]{GGM}.

\noindent {\bf Case (TC).} See~\cite[Lemma~3.6]{GGM}.

\noindent {\bf Case (O).} This case follows from the existence of a local
canonical scaling around a $(d - 3)$-face $G$ whose dual is combinatorially
equivalent to a quadrangular pyramid. We provide the proof to make the 
argument self-contained.

Denote $\mathbf x_0 := \mathbf 0$ and 
$\mathbf x_4 := \mathbf x_1 + \mathbf x_3 - \mathbf x_2$. Then
\begin{equation*}
\{ P + \mathbf x_i : i = 0, 1, \ldots, 4 \}
\end{equation*}
is the set of all parallelohedra of $\mathcal T(P)$ incident to $G$.
By \cite[Lemma~3.7]{GGM}, there exist affine functions 
$U_i : \mathbb R^d \to \mathbb R$, $i = 0, 1, \ldots, 4$ such that if
$P + \mathbf x_i$ and $P + \mathbf x_j$ share a common facet $F_{ij}$
then $U_i$ and $U_j$ coincide on the affine hull of $F_{ij}$ and nowhere
else.
Define
\begin{equation*}
\mathbf a_{ij} := \mathrm{grad}\, U_j - \mathrm{grad}\, U_i.
\end{equation*}
Then the following identities hold.
\begin{equation}\label{eq:gradients}
g(F_1, F_2) = \frac{|\mathbf a_{01}|}{|\mathbf a_{14}|}, \quad
g(F_2, F_3) = \frac{|\mathbf a_{02}|}{|\mathbf a_{01}|}, \quad
g(F_3, F_1) = \frac{|\mathbf a_{23}|}{|\mathbf a_{02}|}.
\end{equation}
Let us prove, the first identity of~\eqref{eq:gradients}. 
The $(d - 2)$-face $F_1 \cap F_2$ is shared by exactly three 
parallelohedra of $\mathcal T(P)$, namely,
$P$, $P + \mathbf x_1$ and 
$P + \mathbf x_2 - \mathbf x_3 = P + \mathbf x_1 - \mathbf x_4$.
The facet $F_2$ is orthogonal to the vector $\mathbf a_{01}$. The facet
$F_1$ is parallel to the facet $(P + \mathbf x_1) \cap (P + \mathbf x_4)$ and therefore is orthogonal to the vector $\mathbf a_{14}$. The facet 
$(P + \mathbf x_1) \cap (P + \mathbf x_1 - \mathbf x_4)$
is parallel to the facet $(P + \mathbf x_4) \cap P$ and is therefore 
orthogonal to the vector $\mathbf a_{04}$. 
But $\mathbf a_{01} + \mathbf a_{14} - \mathbf a_{04} = \mathbf 0$,
hence indeed
$g(F_1, F_2) = \frac{|\mathbf a_{01}|}{|\mathbf a_{14}|}$. 

The proof of the third identity of~\eqref{eq:gradients} is obtained
from that of the first identity by interchange $\mathbf x_1$ with 
$\mathbf x_2$ and $\mathbf x_3$ with $\mathbf x_4$.

Concerning the second identity of~\eqref{eq:gradients}, 
the $(d - 2)$-face $F_2 \cap F_3$ is shared by parallelohedra 
$P$, $P + \mathbf x_1$ and $P + \mathbf x_2$. The normals to
the faces $F_2$, $F_3$ and $(P + \mathbf x_1) \cap (P + \mathbf x_2)$
are, respectively, $\mathbf a_{01}$, $\mathbf a_{02}$ and $\mathbf a_{12}$.
Since $\mathbf a_{01} + \mathbf a_{02} - \mathbf a_{12} = \mathbf 0$,
the second identity of~\eqref{eq:gradients} follows.

Finally, we have the identity
\begin{equation*}
\mathbf a_{12} + \mathbf a_{23} + \mathbf a_{34} - \mathbf a_{14}
= \mathbf 0.
\end{equation*} 
The vectors $\mathbf a_{12}$ and $\mathbf a_{23}$ span
a 2-dimensional space, and $\mathbf a_{34}$ and $\mathbf a_{14}$
are collinear to $\mathbf a_{12}$ and $\mathbf a_{23}$ respectively.
Hence $\mathbf a_{12} = -\mathbf a_{34}$ and 
$\mathbf a_{23} = -\mathbf a_{14}$. In particular, 
$|\mathbf a_{23}| = |\mathbf a_{14}|$.
 
Expanding the definition of $g(\gamma)$ via~\eqref{eq:gradients} yields
\begin{equation*}
g(\gamma) = g(F_1, F_2)g(F_2, F_3)g(F_3, F_1) = 
\frac{|\mathbf a_{01}|}{|\mathbf a_{14}|} \cdot
\frac{|\mathbf a_{02}|}{|\mathbf a_{01}|} \cdot
\frac{|\mathbf a_{23}|}{|\mathbf a_{02}|} = 
\frac{|\mathbf a_{23}|}{|\mathbf a_{14}|} 
= 1,
\end{equation*}
finishing the proof.
\end{proof}

To conclude this section, we reproduce the main result of~\cite{GGM}.
See Section~\ref{sec:graph} of this paper for further discussion
of the approach.

\begin{theorem}[\cite{GGM}, Theorem 4.6]\label{thm:ggm}
If the homology group $H_1(P_\pi, \mathbb Q)$ is generated by half-belt cycles, then $P$ is affinely Voronoi.
\end{theorem}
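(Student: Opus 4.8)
The plan is to promote the gain $g$ to a homomorphism out of $H_1(P_\pi,\mathbb Q)$ and then invoke Proposition~\ref{prop:unit_gain}: by that proposition it suffices to show $g(\gamma)=1$ for every generic closed path $\gamma$ on $P_\pi$. First I would record the formal properties of $g$. It is multiplicative under concatenation of paths, invariant under reparametrization, and satisfies $g(\overline\gamma)=g(\gamma)^{-1}$, the last because $g(F,G)\,g(G,F)=1$; this in turn follows from the uniqueness, up to a common positive scalar, of the linear dependence among the unit normals $\mathbf e_{P,P_F}$, $\mathbf e_{P_G,P}$, $\mathbf e_{P_F,P_G}$ (the remark following the definition of $g$). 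Combined with a standard PL transversality argument showing that every loop in $P_\pi$ is homotopic to a generic one, these properties reduce the theorem to proving that $g$ is a \emph{homotopy invariant} of generic loops on $P_\pi$; granting this, $g$ descends to a homomorphism $\pi_1(P_\pi)\to\mathbb R_{>0}$.

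The homotopy invariance is the heart of the proof. Put a homotopy between two generic closed paths in $P_\pi$ in general position with respect to the face structure of $P$. In $P_\pi$, a $(d-1)$-manifold, facets have codimension $0$, non-removed $(d-2)$-faces have codimension $1$, and $(d-3)$-faces codimension $2$, while all lower faces have codimension at least $3$; hence a generic homotopy surface meets the $(d-3)$-faces of $P$ only in finitely many isolated points and misses everything of lower dimension. Thus the homotopy decomposes into finitely many elementary moves, each localized near a single $(d-3)$-face $G$ lying on $P_\delta$, and the change in $g$ produced by one such move equals the gain of a small loop in $P_\delta$ encircling $G$. This small loop runs once around the link cycle of $G$ in $\partial P$, so its facet sequence lists exactly the facets of $P$ through $G$, each once; it is therefore a trivially contractible circuit and has gain $1$ by Lemma~\ref{lem:basic-circuits}, case (TC). Running through Delaunay's five combinatorial types of dual $3$-cells, one checks that $G\subset P_\delta$ forces all $(d-2)$-faces of $P$ through $G$ to be primitive (so the small loop genuinely stays on $P_\delta$), while in the remaining configurations at least one non-primitive wall around $G$ has been deleted, breaking the link cycle and precluding any new homotopy near $G$.

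Since $\mathbb R_{>0}$ is abelian, the homomorphism $\pi_1(P_\pi)\to\mathbb R_{>0}$ factors through $H_1(P_\pi,\mathbb Z)$, and composing with $\log$ yields $\phi\colon H_1(P_\pi,\mathbb Z)\to(\mathbb R,+)$. By Lemma~\ref{lem:basic-circuits}, case (HB), every half-belt circuit has gain $1$, so $\phi$ vanishes on the subgroup $N\subseteq H_1(P_\pi,\mathbb Z)$ generated by the classes of half-belt cycles; hence $\phi$ factors through $H_1(P_\pi,\mathbb Z)/N$, and since $\mathbb R$ is torsion-free it even factors through the maximal torsion-free quotient of $H_1(P_\pi,\mathbb Z)/N$. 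The hypothesis that $H_1(P_\pi,\mathbb Q)$ is generated by half-belt cycles says exactly that $H_1(P_\pi,\mathbb Z)/N$ is a torsion group, so that quotient is trivial and $\phi\equiv 0$. Equivalently $g(\gamma)=1$ for the homology class of every closed path, hence for every generic closed path $\gamma$ on $P_\pi$, and Proposition~\ref{prop:unit_gain} gives that $P$ is affinely Voronoi.

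The step I expect to be the main obstacle is the homotopy invariance of $g$. It splits into (i) a general-position argument reducing an arbitrary homotopy in the open PL manifold $P_\pi$ to a finite composition of moves each localized at one $(d-3)$-face, and (ii) the case analysis over Delaunay's five types of dual $3$-cells establishing that the defect of such a move is a type-(TC) circuit — equivalently, that a local canonical scaling exists around every $(d-3)$-face of $\mathcal T(P)$, with the triangular prism, the cube, and the base vertices of the quadrangular pyramid being the configurations where one must argue that the deleted non-primitive walls genuinely obstruct any new homotopy. Everything after that is the elementary homological linear algebra over $\mathbb Q$ sketched in the previous paragraph; in particular the Ordine circuits of Lemma~\ref{lem:basic-circuits}, case (O), play no role here and are needed only for the stronger criterion of Section~\ref{sec:simplicial}.
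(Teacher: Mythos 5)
This theorem is not proved in the paper but imported from~\cite{GGM}, and your argument is essentially the proof given there: one shows the gain is a homotopy (hence homology) invariant of generic loops via general position and the local canonical scalings around $(d-3)$-faces of $P_\delta$ (case (TC) of Lemma~\ref{lem:basic-circuits}), so that $\log g$ defines a homomorphism $H_1(P_\pi,\mathbb Z)\to\mathbb R$ which kills the half-belt classes by case (HB), and the hypothesis together with torsion-freeness of $\mathbb R$ forces it to vanish, whence Proposition~\ref{prop:unit_gain} applies. You correctly isolate the homotopy-invariance step as the real content (it is the substance of the corresponding lemmas in~\cite{GGM}), and your homological reduction and the observation that the Ordine circuits are not needed here are both accurate.
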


For the sake of brevity, we will call the condition of Theorem~\ref{thm:ggm}
{\it the GGM condition}.

\section{Simplicial complex approach}\label{sec:simplicial}

In this section we propose yet another sufficient condition for a parallelohedron to satisfy the Voronoi conjecture. Both GGM and Ordine
conditions are, apparently, special cases of our condition.

We will introduce the notion of a {\it Venkov complex} $Ven(P)$ 
associated with a parallelohedron $P$. By definition,
$Ven(P)$ will be a finite homogeneous 2-dimensional simplicial
complex. The name is justified by the observation that the edge structure
of $Ven(P)$ coincides with that of the {\it red Venkov graph} 
$VG_r(P)$. The graph $VG_r(P)$ may, however, have additional isolated 
vertices, and the number of isolated vertices 
is the number of 1-dimensional summands in the representation of $P$
as a direct sum of irreducible parallelohedra, i.e. those that can not be represented as direct sum of parallelohedra of smaller dimension..

Let $\mathcal A$ be an arbitrary set {\it (the alphabet)} of {\it labels}, 
$T_m(\mathcal A)$ be the set of all $m$-element subsets of $\mathcal A$.
Every finite subset $X \subseteq T_m(\mathcal A)$ defines a finite
homogeneous $(m - 1)$-dimensional simplicial complex $\mathcal C(X)$. 
Namely, the vertices of $\mathcal C(X)$ are in one-to-one correspondence with the set $\bigcup\limits_{S \in X} S$ 
(i.e., the set of labels that are used at least once). 
The facets of $\mathcal C(X)$ are in one-to-one correspondence with
elements of $X$ so that each $S \in X$ corresponds to a facet with
the vertex set labeled exactly by the elements of $S$.
For our purposes we set $\mathcal A := \Lambda(P) / 2\Lambda(P)$, 
i.e., the alphabet is the set of parity classes of the lattice $\Lambda(P)$.
The element $x + 2\Lambda(P) \in \Lambda(P) / 2\Lambda(P)$, i.e., the
parity class of the lattice point $x$, will be denoted by $\bar{x}$.

It will be convenient to use a shorthand notation 
\begin{multline*}
O(a, b, c, a', b', c') := \{ \{a, b, c\}, \{a', b', c'\}, \{a', b, c\},
\{a, b', c'\}, \{a, b', c\}, \\
\{a', b, c'\}, \{a, b, c'\}, \{a', b', c\} \}
\end{multline*}
One can see that $\mathcal C(O(a, b, c, a', b', c'))$ is combinatorially isomorphic to the surface of an octahedron with the pairs of opposite
vertices labeled as $\{a, a'\}$, $\{b, b'\}$ and $\{c, c'\}$. 

\begin{definition}
Let $P$ be a parallelohedron of dimension $d \geq 4$. Let $\mathcal D^3(P)$
denote the set of all dual 3-cells of the tiling $\mathcal T(P)$. 
For each $D \in \mathcal D^3(P)$ define a set
$X(D) \subseteq T_3(\mathcal A)$, where 
$\mathcal A := \Lambda(P) / 2\Lambda(P)$, as follows:
\begin{enumerate} 
\item If $D$ is a combinatorial tetrahedron and $V(D) = \{a, b, c, d\}$, set 
\begin{equation*}
X(D) := O\left( \overline{a + b}, \overline{a + c}, \overline{a + d},
\overline{c + d}, \overline{b + d}, \overline{b + c} \right).
\end{equation*}
\item If $D$ is a combinatorial pyramid, $V(D) = \{s, a, b, c, d\}$ and
$a + c = b + d$, set
\begin{equation*}
X(D) := O\left( \overline{s + a}, \overline{s + b}, 
\overline{a + d},
\overline{s + c}, \overline{s + d}, \overline{a + b}\right).
\end{equation*}
\item If $D$ is a combinatorial octahedron, $V(D) = \{a, b, c, d, e, f\}$
and $a + d = b + e = c + f$, set 
\begin{equation*}
X(D) := O\bigl( \overline{a + b} , 
\overline{a + c} , \overline{b + c} ,
\overline{a + e} , \overline{a + f} , \overline{b + f} \bigr).
\end{equation*}
\item If $D$ is a combinatorial prism, $V(D) = \{a, b, c, a', b', c'\}$
and $a - a' = b - b' = c - c'$, set
\begin{equation*}
X(D) := \{ \{ \overline{a + b} , \overline{a + c} , \overline{b + c} \} \}.
\end{equation*}
\item If $D$ is a combinatorial cube, set $X(D) := \varnothing$.
\end{enumerate}
Write, finally,
\begin{equation*}
X := \bigcup\limits_{D \in \mathcal D^3(P)} X(D).
\end{equation*}
Then the simplicial complex $Ven(P) := \mathcal C(X)$ is called the
{\it Venkov complex} of $P$. The faces of $Ven(P)$ are called the
{\it Venkov triangles}.
\end{definition}

\begin{remark}
The cases (2), (3), and (4) of dual 3-cells have certain linear relations between vertices. These relations force linear relations between parity classes as well.

For example in case (2), the relation $a+c=b+d$ implies that $\overline{a + d} = \overline{b + c}$ and $\overline{a + b} = \overline{c + d}$, so the set $X(D)$ can be written in an equivalent way as 
$$X(D) = O\left( \overline{s + a}, \overline{s + b}, 
\overline{b + c},
\overline{s + c}, \overline{s + d}, \overline{c + d} \right).$$

Similarly in case (3) , the relations $a+d=b+e=c+f$ between vertices of $D$ imply the following relations between parity classes: $\overline{a + b} = \overline{d + e}$, $ 
\overline{a + c} = \overline{d + f}$, $\overline{b + c} = \overline{e + f}$, $\overline{a + e} = \overline{b + d}$, $\overline{a + f} = \overline{c + d}$, $\overline{b + f} = \overline{c + e}$.

Finally in case (4), the relations $a - a' = b - b' = c - c'$ imply $\overline{a + b} = \overline{a' + b'}$, $\overline{a + c} = \overline{a' + c'}$, $\overline{b + c} = \overline{b' + c'}$.

In the sequel we may switch between equivalent parity classes without saying it explicitly.
\end{remark}

For further simplicity, we identify the vertices of $Ven(P)$ with their labels.

Let us recall the definition of Venkov graphs.

\begin{definition}[The Venkov graph, see, for 
instance,~\cite{Ordine_thesis})]
Let $P$ be a parallelohedron. Set
\begin{equation*}
V := \{ \{ F, -F \} \, \vert \, \text{$F$ is a facet of $P$} \}.
\end{equation*}
In other words, $V$ is the set of pairs of opposite facets of $P$. Let
now $\{ F, -F \}$ and $\{ F', -F' \}$ be two distinct elements of $V$.
We say that 
\begin{itemize}
\item $\bigl\{ \{ F, -F \}, \{ F', -F' \} \bigr\} \in E_b$ if 
$F \cap F'$ is a non-primitive $(d - 2)$-face of $\mathcal T(P)$. 
\item $\bigl\{ \{ F, -F \}, \{ F', -F' \} \bigr\} \in E_r$ 
if either $F \cap F'$ or $F \cap (-F')$ is a primitive $(d - 2)$-face of 
$\mathcal T(P)$.
\end{itemize}
Then $VG(P) := (V, E_b \cup E_r)$ is called
the {\it Venkov graph} of $P$ and $VG_r(P) := (V, E_r)$ 
(respectively,  $VG_b(P) := (V, E_b)$) is the {\it red}
(respectively, {\it blue}) {\it Venkov graph} of $P$. 
\end{definition}

The next definition establishes a correspondence between the Venkov complex
and the Venkov graph of a parallelohedron.

\begin{definition}\label{def:phi-map}
Given a parallelohedron $P$ of dimension $d \geq 4$, let a map
\begin{equation*}
\varphi : vert(Ven(P)) \to vert(VG_r(P))
\end{equation*}
be defined as follows. For each $x \in vert(Ven(P))$  (and thus
satisfying $x \in \Lambda(P) / 2\Lambda(P)$) we set
$\varphi(x) := \{ F, -F \}$ if $F = P \cap (P + a)$, where
$\overline{a} = \xi$.
\end{definition}

\begin{remark}
Each parity class of $\Lambda(P)$ (i.e., a coset in 
$\Lambda(P) / 2\Lambda(P)$) contains either two opposite facet
vectors of $P$, or does not contain facet vectors. From the definition of
$Ven(P)$ it follows immediately that each parity class 
$\xi \in vert(Ven(P))$ contains exactly two facet vectors, which, in turn,
define the pair $(F, -F)$ uniquely.
\end{remark}

\begin{lemma}
The map $\varphi$ from Definition~\ref{def:phi-map} has the following
properties.
\begin{enumerate}
\item $\varphi$ is injective.
\item $\varphi$ induces a bijection between the edge sets of $VG_r(P)$
and $Ven(P)$.
\end{enumerate}
\end{lemma}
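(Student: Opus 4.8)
The plan is to verify the two assertions in turn, using the remark immediately preceding the lemma as the key structural input: every parity class $\xi \in vert(Ven(P))$ contains exactly two opposite facet vectors $\pm a$, so $\varphi$ is well-defined and the target pair $\{F, -F\}$ with $F = P \cap (P + a)$ is determined. For injectivity, suppose $\varphi(\xi) = \varphi(\eta) = \{F, -F\}$. Then the facet vectors in the class $\xi$ are $\pm a$ where $F = P \cap (P+a)$, and likewise the facet vectors in $\eta$ are $\pm a'$ with $F = P \cap (P + a')$. But a facet $F$ of $P$ determines the facet vector $a$ uniquely: if $P \cap (P+a)$ and $P \cap (P + a')$ are the same facet, then $a = a'$ since the tiling is face-to-face and the centers of adjacent tiles are determined by the shared facet. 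Hence $\pm a = \pm a'$, so $\xi = \overline{a} = \overline{a'} = \eta$, proving (1).

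For (2), recall that the edges of $VG_r(P)$ join pairs $\{F, -F\}$, $\{F', -F'\}$ for which $F \cap F'$ or $F \cap (-F')$ is a primitive $(d-2)$-face, and the triangles (facets) of $Ven(P)$ are the sets $S \in X$, each of which is a $3$-element subset of $\mathcal A$ arising from some dual $3$-cell $D$. The claim is that $\varphi$, extended simplicially, carries the edge set of $Ven(P)$ bijectively onto $E_r$. The natural route is: first, every edge of $Ven(P)$ lies in some Venkov triangle $S = \{\overline{u+v}, \overline{u+w}, \overline{v+w}\}$ coming from a dual $3$-cell $D$ with $u, v, w$ among its vertices (or their parity-equivalents), and the three tiles $P + u$, $P + v$, $P + w$ are pairwise facet-adjacent because $D$ is one of the five Delaunay types in which the corresponding vertices of $\tconv D$ are pairwise joined by edges; each such facet adjacency is precisely a primitive $(d-2)$-face (being an edge of the dual $3$-cell, hence a $3$-valent codimension-$2$ face), so $\{\varphi(\overline{u+v}), \varphi(\overline{u+w})\}$ etc.\ lie in $E_r$. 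Conversely, given an edge $\{\{F, -F\}, \{F', -F'\}\} \in E_r$, say $F \cap F'$ is a primitive $(d-2)$-face $G'$; then $G'$ is contained in a $(d-3)$-face $G$ whose dual $3$-cell $D(G)$ has an edge corresponding to $G'$, and unwinding which tiles meet along $G'$ produces the pair $\overline{u+v}$, $\overline{u+w}$ realizing this edge inside the Venkov triangle $X(D(G))$. The octahedral bookkeeping in the definition of $O(\cdot)$ must be checked to confirm that in each of the cases (1)--(4) every edge of $\mathcal C(X(D))$ corresponds to a genuine facet-adjacency among the tiles incident to $G$, and no spurious edges are introduced; the prism case (4) is the simplest (a single triangle), and the tetrahedron, pyramid and octahedron cases follow by matching the octahedron's edges against the edges of $\tconv D$.

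The main obstacle I expect is the careful case analysis in the converse direction, namely showing that $\varphi$ is \emph{surjective} onto $E_r$ with no double-counting: one must verify that every primitive $(d-2)$-face $G'$ of the tiling is ``seen'' by at least one dual $3$-cell of a $(d-3)$-face $G \subset G'$ and that the induced edge in the corresponding Venkov triangle is exactly $\{\varphi^{-1}(\{F,-F\}), \varphi^{-1}(\{F',-F'\})\}$, while simultaneously different dual $3$-cells that see the same primitive face produce the \emph{same} edge of $Ven(P)$ (so that the map on edges is a genuine bijection and not merely a surjection). This requires knowing that the two tiles flanking a primitive $(d-2)$-face are separated by a facet vector whose parity class is the symmetric difference of the parity classes of the facet vectors to the third tile --- a consistency statement that is forced by the relation $\mathbf e_{P,P_F} + \mathbf e_{P_G,P} + \mathbf e_{P_F,P_G} = 0$ having coefficients of equal sign together with the fact that all the relevant lattice vectors sum appropriately. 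Once this parity consistency is established, both directions close and the bijection on edges follows.
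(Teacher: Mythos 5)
Your proposal is correct and follows essentially the same route as the paper: injectivity comes from the fact that a facet determines its facet vector (hence its parity class), the forward inclusion of edges comes from the definition of $X(D)$, and surjectivity onto $E_r$ comes from passing to a $(d-3)$-subface $G$ of the primitive $(d-2)$-face and inspecting $X(D(G))$. Two of your worries are actually automatic: the ``no double-counting'' issue disappears because injectivity of $\varphi$ on vertices already makes the induced edge map injective, and the parity consistency you want ($\overline{v-u}=\overline{u+v}$) is immediate from working modulo $2\Lambda(P)$, with no need to invoke the linear relation among the unit normals.
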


\begin{proof}
{\bf Assertion (1)} holds since distinct parity classes of $\Lambda(P)$ 
determine distinct pairs of opposite facets.

\noindent {\bf Assertion (2)} is proved by verifying the properties 
{\bf (a)} and {\bf (b)} below.

\noindent {\bf (a)} If $\{ x, y \}$ is an edge of $Ven(P)$
then $\{ \varphi(x), \varphi(y) \}$ is an edge of $VG_r(P)$.
This property follows immediately from the definition of $Ven(P)$.

\noindent {\bf (b)} If $\bigl\{ \{ F, -F \}, \{ F', -F' \} \bigr\}$
is an edge of $VG_r(P)$, then $\varphi^{-1}(\{ F, -F \})$ and
$\varphi^{-1}(\{ F', -F' \})$ exist. Moreover, they are connected with
an edge of $Ven(P)$. In order to verify this property, assume, with
no loss of generality, that $F \cap F'$ is a primitive $(d - 2)$-face
of $P$. Then the property {\bf (b)} is immediate by considering
the set of triples $X(D(G))$, where $G$ is an arbitrary 
$(d - 3)$-subface of $F \cap F'$, and $D(G)$ is the dual 3-cell of $G$.
\end{proof}

By the following corollary, the Venkov complex is, in a sense,
a 2-dimensional extension of the red Venkov graph, which
justifies our terminology.

\begin{corollary}\label{cor:graph-is-1-skeleton}
Let $P$ be a parallelohedron of dimension $d \geq 4$. Then
red Venkov graph $VG_r(P)$ can be obtained by adding a finite number
(possibly, zero) of isolated vertices to the 1-dimensional skeleton
of $Ven(P)$.
\end{corollary}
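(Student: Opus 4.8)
The statement to prove is Corollary~\ref{cor:graph-is-1-skeleton}, which asserts that $VG_r(P)$ is obtained from the $1$-skeleton of $Ven(P)$ by adding finitely many isolated vertices.

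\medskip

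The plan is to deduce this directly from the preceding Lemma, which establishes that $\varphi : \mathrm{vert}(Ven(P)) \to \mathrm{vert}(VG_r(P))$ is injective and induces a bijection between the edge sets of $Ven(P)$ and $VG_r(P)$. First I would observe that, since $Ven(P)$ is a simplicial complex whose faces are triangles, its $1$-skeleton has exactly the same vertex set and edge set as $Ven(P)$ itself; so it suffices to compare the graph $(\mathrm{vert}(Ven(P)), E(Ven(P)))$ with $VG_r(P)$ under the map $\varphi$. By Assertion (1) of the Lemma, $\varphi$ is an injection on vertices; by Assertion (2), it carries the edge set of $Ven(P)$ bijectively onto the edge set $E_r$ of $VG_r(P)$. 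Hence $\varphi$ embeds the $1$-skeleton of $Ven(P)$ into $VG_r(P)$ as a subgraph that contains \emph{all} edges of $VG_r(P)$. Consequently the vertices of $VG_r(P)$ not in the image of $\varphi$ are precisely the vertices incident to no edge of $E_r$, i.e., they are isolated vertices of $VG_r(P)$, and adding them back to $\varphi(1\text{-skeleton of } Ven(P))$ recovers $VG_r(P)$ exactly.

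\medskip

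It then remains to check that the number of such added vertices is finite (possibly zero). This is immediate: $P$ has finitely many facets, so $VG_r(P)$ has finitely many vertices to begin with, and the set of isolated vertices is a subset of this finite set. (If one wishes, one can identify this count with the number of one-dimensional direct summands of $P$, as indicated in the text preceding the definition of $Ven(P)$, but this identification is not needed for the statement.) Assembling these observations: $\varphi$ identifies the $1$-skeleton of $Ven(P)$ with the subgraph of $VG_r(P)$ spanned by all non-isolated vertices, and $VG_r(P)$ is obtained by adjoining the finitely many remaining isolated vertices.

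\medskip

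There is essentially no obstacle here — the corollary is a formal consequence of the Lemma, and the only point requiring a moment's care is the elementary fact that passing to the $1$-skeleton of a $2$-dimensional simplicial complex does not change its underlying graph, together with the bookkeeping that an edge-set bijection plus a vertex injection forces the complement of the image to consist of isolated vertices. I would keep the proof to two or three sentences.
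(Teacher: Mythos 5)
Your proof is correct and matches the paper's intent: the paper states this corollary without a separate proof, treating it exactly as you do — an immediate formal consequence of the preceding lemma's vertex injectivity and edge-set bijection, with the leftover vertices of $VG_r(P)$ necessarily isolated and finite in number.
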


\begin{remark}
The number of additional isolated vertices in $VG_r(P)$ equals 
the number of 1-dimensional summands in the representation of $P$
as a direct sum of irreducible parallelohedra, see~\cite{Ordine_thesis}.
\end{remark}

The next lemma explains our choice of the 2-dimensional face 
structure for the Venkov complex.

\begin{lemma}\label{lem:ven_face}
Let a triple $\{ x_1, x_2, x_3 \} \in T_3(\Lambda(P) / 2 \Lambda(P))$ 
span a 2-dimensional face of $Ven(P)$. For $i = 1, 2, 3$ let 
$\varphi^{-1}(x_i) = \{ F_i, -F_i \}$. Then
\begin{enumerate}
\item There exists a generic closed path $\gamma$ on 
$P_{\pi}$, whose lift $\gamma_{\delta}$ onto
$P_{\delta}$ satisfies 
\begin{equation*}
\langle \gamma_{\delta} \rangle = [\pm F_1, \pm F_2, \pm F_3, \pm F_1]
\end{equation*} 
for an appropriate choice of signs.
\item Such a path $\gamma$ is either half-belt, or trivially 
contractible, or Ordine.
\end{enumerate}
\end{lemma}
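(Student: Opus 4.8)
The plan is to go through the five types of dual $3$-cells one by one, since the Venkov triangles of $Ven(P)$ arise precisely from the sets $X(D)$ attached to dual $3$-cells, and the set $X(D)$ was defined so as to record exactly the half-belt, Ordine and trivially contractible circuits passing through a given dual cell. Cubes contribute nothing, so four cases remain: tetrahedron, pyramid, octahedron and prism. For each case I would take a triple $\{x_1,x_2,x_3\}$ that spans a $2$-face of $\mathcal C(X(D))$, translate it back through $\varphi$ into a triple of pairs of opposite facets $\{F_i,-F_i\}$, and then exhibit the corresponding closed path on $P_\pi$ explicitly. The key observation to set up first is that if $\overline{a}$ and $\overline{b}$ are parity classes of facet vectors and $\overline{a+b}$ is also a facet vector's parity class, then the three facets $P\cap(P+a)$, $P\cap(P+b)$ and $P\cap(P+a+b)$, together with their antipodes, are the three facets meeting along a primitive $(d-2)$-face (this is the standard ``three facets around a primitive face'' picture, which underlies property (b) in the preceding lemma); reading the three pairwise intersections in order gives a closed triangular path whose facet sequence is $[\pm F_1,\pm F_2,\pm F_3,\pm F_1]$. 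That handles part (1) uniformly: every $2$-face of $Ven(P)$ gives such a triangular path.

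For part (2) I would match each of the four nontrivial cases to one of (HB), (TC), (O). The prism case, item (4), is the cleanest: there $X(D)$ is a single triple $\{\overline{a+b},\overline{a+c},\overline{b+c}\}$, and because $a-a'=b-b'=c-c'$ the three facet vectors $a+b$, $a+c$, $b+c$ are all parallel to the common primitive $(d-2)$-face of the belt associated to the prism — so the path is a half-belt circuit; here $-F_1$ corresponds to the opposite facet $(-a-b)$-direction and the relation $(a+b)+(a+c)\ne 0$ in general but the third facet $b+c$ closes the belt, matching the $[F_1,F_2,F_3,-F_1]$ pattern of (HB) after identifying antipodes on $P_\pi$. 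The tetrahedron case, item (1), should reduce to (TC): the six parity classes $\overline{a+b},\dots,\overline{b+c}$ are the facet vectors to the six facets sharing the $(d-3)$-face $G$ dual to the tetrahedron $D$, and any octahedral face $O(\dots)$ picks out a triangle of three of them sharing $G$, which is a trivially contractible circuit. The pyramid case, item (2), is designed to produce Ordine circuits: the apex $s$ plays the role of $\mathbf 0$ and the identity $a+c=b+d$ is exactly the defining relation $\mathbf x_1+\mathbf x_3-\mathbf x_2=\mathbf x_4$ of condition (O), so translating appropriately one reads off $F_1=P\cap(P+\mathbf x_2-\mathbf x_3)$, $F_2=P\cap(P+\mathbf x_1)$, $F_3=P\cap(P+\mathbf x_2)$ up to relabeling and signs. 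The octahedron case, item (3), should again be (TC) or (HB) depending on the triangle chosen: the six vertices $a,\dots,f$ of $D$ with $a+d=b+e=c+f$ yield parity classes that are facet vectors around the $(d-3)$-face dual to $D$, and each octahedral triangle of $O(\overline{a+b},\dots)$ is either a set of three facets through that $(d-3)$-face (giving (TC)) or a half-belt; I would check the belt/contractible dichotomy by looking at which linear combination of the three chosen facet vectors vanishes.

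The main obstacle I anticipate is bookkeeping of signs and of which antipode $\pm F_i$ appears: the statement deliberately says $[\pm F_1,\pm F_2,\pm F_3,\pm F_1]$ because the triangle on $P_\pi$ does not determine a coherent sign choice on $P_\delta$, and one has to verify that \emph{some} sign choice yields a legitimate primitive path, i.e. that consecutive facets in the chosen sequence really do meet along a primitive $(d-2)$-face of $P$ itself (not merely of the tiling) and that the three crossings can be realized by a genuine continuous piecewise-linear generic path. The cleanest way to discharge this is to note that for each of the four dual-cell types the three relevant facet vectors sum (with signs) to zero or to a fourth facet vector in a controlled way, which is exactly the algebraic shadow of ``three facets around a primitive face'' or ``belt closure'', and then invoke the already-proved existence results: case (HB) and (TC) paths exist by the constructions in Lemma~\ref{lem:basic-circuits} (equivalently \cite{GGM}), and case (O) paths exist by the explicit Ordine-circuit construction given there. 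Thus the real content is the combinatorial translation — showing that every $2$-face of $\mathcal C(X(D))$ falls into exactly one of the three named families — while the existence of the path and the equality $g(\gamma)=1$ are then immediate from Lemma~\ref{lem:basic-circuits}. I would organize the write-up as: (i) the ``three facets around a primitive face'' lemma restated in parity-class language; (ii) the prism case; (iii) the tetrahedron case; (iv) the octahedron case; (v) the pyramid case; (vi) assembling part (1) and part (2).
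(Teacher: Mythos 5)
Your part (1) and your overall strategy --- pass to the dual $3$-cell $D$ with $\{x_1,x_2,x_3\}\in X(D)$, realize each edge of the triangle by a primitive $(d-2)$-adjacency, and classify the resulting circuit by the combinatorial type of $D$ --- are exactly the paper's proof, and the prism and pyramid cases work as you describe (for the pyramid, the four lateral faces of $D$ give half-belt triangles and the remaining four faces of $\mathcal C(X(D))$ are Ordine). Your tetrahedron claim is off but harmlessly so: of the eight faces of $\mathcal C(X(D))$, the four of the form $\{\overline{a+b},\overline{a+c},\overline{b+c}\}$ come from the $2$-faces of $D$ and are half-belts, while only the four of the form $\{\overline{a+b},\overline{a+c},\overline{a+d}\}$ (the three facets of one tile around $G$) are trivially contractible; both types are permitted by the statement, so the conclusion survives.

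The octahedron case, however, is a genuine gap, and your proposed check (``which signed combination of the three facet vectors vanishes'') would expose it without closing it. Every vertex of an octahedral dual cell has four neighbours, so the trivially contractible circuits around the relevant $(d-3)$-faces are the three \emph{equators} of $\mathcal C(X(D))$ (length-$4$ cycles), never triangles. The antipodal pairs of $2$-faces of $D$ account for only four of the eight faces of $O\bigl(\overline{a+b},\overline{a+c},\overline{b+c},\overline{a+e},\overline{a+f},\overline{b+f}\bigr)$, namely $\{\overline{a+b},\overline{a+c},\overline{b+c}\}$, $\{\overline{a+b},\overline{a+f},\overline{b+f}\}$, $\{\overline{a+c},\overline{a+e},\overline{b+f}\}$ and $\{\overline{a+e},\overline{a+f},\overline{b+c}\}$; these are half-belts. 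A remaining face such as $\{\overline{a+b},\overline{a+c},\overline{b+f}\}$ has facet vectors $a-b$, $a-c$, $b-f$ with no vanishing signed combination (so it is not a half-belt), and no choice of signs makes the three facets of $P$ contain a common translate of $G$ (so it is not trivially contractible); nor is there a pyramidal dual cell in sight. The gain of such a circuit is still $1$, but proving this requires the local canonical scaling $U_v$ around $G$ together with the observation that the parallelism constraints force the gradients to satisfy $\mathrm{grad}\,U_a+\mathrm{grad}\,U_d=\mathrm{grad}\,U_b+\mathrm{grad}\,U_e=\mathrm{grad}\,U_c+\mathrm{grad}\,U_f$, i.e.\ an argument parallel to case (O) of Lemma~\ref{lem:basic-circuits} rather than an instance of (HB), (TC) or (O). You should be aware that the paper's own one-line justification of part (2) asserts the same half-belt/trivially-contractible dichotomy for the octahedron, so this supplement is needed there as well (and downstream, in the cocycle verification of Theorem~\ref{thm:simp-condition}).
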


\begin{proof} {\bf Assertion (1).} There exists $\tilde{F_2} \in \{ F_2, -F_2 \}$
which is adjacent to $F_1$ by a primitive $(d - 2)$-face. Similarly,
choose $\tilde{F_3} \in \{ F_3, -F_3 \}$ adjacent to $\tilde{F_2}$ and 
$\tilde{F_1} \in \{ F_1, -F_1 \}$ adjacent to $\tilde{F_3}$. Then
it is possible to construct $\gamma_{\delta}$ so that 
$\langle \gamma_{\delta} \rangle = [F_1, \tilde{F_2}, 
\tilde{F_3}, \tilde{F_1}]$ and the image of $\gamma_{\delta}$ under
the natural projection $P_{\delta} \to P_{\pi}$ is a closed path $\gamma$.

\noindent {\bf Assertion (2).} By definition of $Ven(P)$, we have $\{ x_1, x_2, x_3 \} \in X(D)$, where
$D$ is a dual 3-cell for $\mathcal T(P)$. In particular 
$X(D) \neq \varnothing$, hence $D$ cannot be a combinatorial cube.
The rest follows from the definition of $Ven(P)$.

Indeed, if $D$ is a combinatorial tetrahedron or a combinatorial 
octahedron, then $\gamma$ is either a half-belt cycle, or a
trivially contractible cycle. If $D$ is a combinatorial quadrangular
pyramid, then $\gamma$ is either a half-belt cycle, or an
Ordine cycle. Finally, if $D$ is a combinatorial prism, then 
$\gamma$ is a half-belt cycle.
\end{proof}

We proceed with the main result of this section --- a sufficient condition 
for a parallelohedron to be affinely Voronoi in terms of its Venkov
complex. From now on we use the notation $C_k(K, R)$ (respectively, 
$C^k(K, R)$) for the spaces of chains (respectively, cochains) of a
simplicial complex $K$ with coefficients in a commutative ring $R$. 

\begin{theorem}\label{thm:simp-condition}
Let $P$ be a parallelohedron of dimension $d \geq 4$. If 
the first cohomology group $H^1(Ven(P), \mathbb R)$ is trivial, then
$P$ is affinely Voronoi.
\end{theorem}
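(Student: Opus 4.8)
The plan is to deduce the GGM condition (Theorem~\ref{thm:ggm}) from the triviality of $H^1(Ven(P),\mathbb R)$, and then invoke that theorem to conclude that $P$ is affinely Voronoi. The bridge between the two-dimensional simplicial complex $Ven(P)$ and the topology of $P_\pi$ will be a chain map (or a map on the relevant homology/cohomology) built from the correspondence of Lemma~\ref{lem:ven_face}: each Venkov triangle $\{x_1,x_2,x_3\}$ gives rise, via $\varphi^{-1}$, to a closed generic path on $P_\pi$ of one of the three basic types, and by Lemma~\ref{lem:basic-circuits} each such path has gain $1$. So the first step is to set up a homomorphism $\Phi$ from $H_1(P_\pi,\mathbb Q)$ (or rather from the subgroup of cycles that matter for the gain functional) into a space controlled by $Ven(P)$; concretely, a closed generic path $\gamma$ on $P_\pi$ crosses a sequence of primitive $(d-2)$-faces, and reading off the facets visited produces a sequence of vertices of $VG_r(P) = vert(Ven(P))$ (up to isolated vertices, by Corollary~\ref{cor:graph-is-1-skeleton}), i.e. a $1$-cycle in the $1$-skeleton of $Ven(P)$; this is essentially the construction already used in~\cite{GGM}.

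The heart of the argument is then a Hurewicz-type / cellular reasoning: a $1$-cycle in the $1$-skeleton of $Ven(P)$ that bounds in $Ven(P)$ can be written as a $\mathbb Q$-linear combination of boundaries of Venkov triangles, and for each Venkov triangle the corresponding closed path on $P_\pi$ is half-belt, trivially contractible, or Ordine. The gain $g(\gamma)$ is multiplicative under concatenation of paths, so (passing to logarithms, $\log g$ is additive and yields a $1$-cocycle on $P_\pi$ pulled back along the above construction) the value of $\log g$ on a $1$-cycle of the $1$-skeleton of $Ven(P)$ depends only on its class in $H_1(Ven(P),\mathbb Q)$. Triviality of $H^1(Ven(P),\mathbb R)$ — equivalently, by universal coefficients and the fact that $Ven(P)$ is a finite complex, $H_1(Ven(P),\mathbb Q)=0$ — therefore forces $\log g$ to vanish on every such cycle, i.e. $g(\gamma)=1$ for every generic closed path $\gamma$ on $P_\pi$ whose ``trace'' lies in the $1$-skeleton of $Ven(P)$. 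One must check that \emph{every} generic closed path on $P_\pi$ has such a trace: this is where the isolated vertices of $VG_r(P)$ (the $1$-dimensional direct summands) need to be handled, and one argues that a facet pair corresponding to an isolated vertex is a ``slab'' direction that a closed primitive path crosses an even number of times with cancelling contributions, or more simply that it contributes a half-belt circuit with gain $1$; alternatively, reduce to the irreducible case since gains and the Venkov complex behave well under direct sums. Having established $g(\gamma)=1$ for all generic closed $\gamma$ on $P_\pi$, Proposition~\ref{prop:unit_gain} immediately gives that $P$ is affinely Voronoi.

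The step I expect to be the main obstacle is making precise the passage ``$1$-cycle on $P_\pi$ $\leadsto$ $1$-cycle on $Ven(P)$'' so that it is genuinely a chain-level map compatible with the gain cocycle, and in particular verifying that the $2$-chains on $P_\pi$ that witness nullhomology can be pushed to $\mathbb Q$-combinations of Venkov triangles. The subtlety is that $P_\pi$ and $Ven(P)$ are \emph{not} homotopy equivalent in general — $Ven(P)$ is an abstract combinatorial gadget — so the correct statement is only that the gain functional factors through the (co)homology of $Ven(P)$, not that the two spaces have the same homology. I would phrase this via the dual complex $\mathcal D(P)$: the relevant relations among gains around a single $(d-3)$-face $G$ are exactly captured by the octahedral (or prism) set $X(D(G))$ in the definition of $Ven(P)$, and a standard argument (the $\pi$-surface is built from the $\delta$-surface which is glued out of primitive $(d-2)$-faces, whose local dual pictures are the dual $3$-cells) shows that the kernel of ``$H_1$ of the $1$-skeleton of $Ven(P)$ $\to$ $H_1(Ven(P))$'' surjects onto the span of the basic circuits inside the gain-relevant part of $H_1(P_\pi,\mathbb Q)$. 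Once this factorization is in hand, the theorem is a one-line consequence of Lemma~\ref{lem:basic-circuits}, Theorem~\ref{thm:ggm} (or directly Proposition~\ref{prop:unit_gain}), and the hypothesis $H^1(Ven(P),\mathbb R)=0$.
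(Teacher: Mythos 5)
Your core argument is the paper's own proof run forwards rather than by contraposition: the paper defines the $1$-cochain $c$ on $Ven(P)$ with $\langle c, \overrightarrow{x_1x_2}\rangle = \ln g(F_1,\pm F_2)$, shows via Lemma~\ref{lem:ven_face} and Lemma~\ref{lem:basic-circuits} that $c$ is a cocycle (exactly your observation that every Venkov triangle yields a half-belt, trivially contractible, or Ordine circuit of gain $1$), and then shows that a closed generic path with $g(\gamma)\neq 1$ would make $c$ a cocycle that is not a coboundary, contradicting $H^1(Ven(P),\mathbb R)=0$; you instead write $c=\delta c'$, conclude $\ln g(\gamma)=0$ for every closed generic path by telescoping over its trace, and finish with Proposition~\ref{prop:unit_gain}. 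That is the same argument and it is complete. One correction to your framing: the announced plan of ``deducing the GGM condition and then invoking Theorem~\ref{thm:ggm}'' is not what your sketch actually does, and it is not a viable route --- the paper only proves the converse implication (GGM implies triviality of $H^1(Ven(P),\mathbb R)$) in Section~\ref{sec:reduction}, and the Ordine triangles of $Ven(P)$ need not be expressible through half-belt cycles in $H_1(P_\pi,\mathbb Q)$; fortunately your argument bypasses Theorem~\ref{thm:ggm} entirely and goes straight to Proposition~\ref{prop:unit_gain}. Finally, the obstacles you flag are non-issues: no comparison of $H_1(P_\pi)$ with $H_1(Ven(P))$ and no pushing of $2$-chains is needed, only that the trace of a closed generic path is a $1$-cycle of $Ven(P)$ pairing with $c$ to give $\ln g(\gamma)$; and isolated vertices of $VG_r(P)$ cannot appear on such a trace, since consecutive facets of a primitive path share a primitive $(d-2)$-face and hence correspond to genuine vertices of $Ven(P)$, while a path confined to a single facet has gain $1$ trivially.
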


\begin{proof}
Assume that $P$ is not affinely Voronoi. It suffices to construct 
a non-trivial cohomology class in $H^1(Ven(P), \mathbb R)$. Equivalently,
we will construct a cochain $c \in C^1(Ven(P), \mathbb R)$ such that 
the coboundary operator $\delta$ vanishes on $c$, but at the same time 
$c$ is not a coboundary itself (i.e., $c \neq \delta c'$ for any 
$c' \in C^0(Ven(P), \mathbb R)$).

Let $x_1, x_2 \in vert(Ven(P))$ be such that $\{ x_1,  x_2 \}$ is an edge 
of $Ven(P)$. For $i = 1, 2$ let $\{ F_i, -F_i \} = \varphi(x_i)$. Then 
$F_1$ is adjacent either to $F_2$, or to $-F_2$ by a primitive 
$(d - 2)$-face of $P$. We then set
\begin{equation}\label{eq:cochain_def}
\langle c, \overrightarrow{x_1 x_2} \rangle := \ln g(F_1, F_2) 
\quad \text{or} \quad
\langle c, \overrightarrow{x_1 x_2} \rangle := \ln g(F_1, -F_2),
\end{equation}
respectively.  

Let us prove that $c$ is a cocycle. 
Consider an arbitrary 2-dimensional face $\{ x_1, x_2, x_3 \}$ of $Ven(P)$.
Applying Lemma~\ref{lem:ven_face} to $\{ x_1, x_2, x_3 \}$, yields 
a generic closed path $\gamma$ on $P_{\pi}$. We have
\begin{equation*}
\langle \delta c, \{x_1, x_2, x_3 \} \rangle = 
\langle c, \partial \{x_1, x_2, x_3 \} \rangle = 
\langle c, \overrightarrow{x_1 x_2} \rangle +  
\langle c, \overrightarrow{x_2 x_3} \rangle + 
\langle c, \overrightarrow{x_3 x_1} \rangle = 
\ln g(\gamma) = 0,
\end{equation*}
where the last identity is a consequence of Lemma~\ref{lem:basic-circuits}.
Since $\delta c$ vanishes on every 2-face of $Ven(P)$, $c$ is indeed
a cocycle.
	
In turn, $c$ is not a coboundary. In order to prove that
we assume the converse, i.e., $c = \delta c'$, where $c'$ is a 
$0$-cochain. 

By Proposition~\ref{prop:unit_gain}, there exists a closed path
$\gamma$ on $P_{\pi}$ such that $g(\gamma) \neq 1$.
Let $\gamma_{\delta}$ be a lift of $\gamma$ onto $P_{\delta}$.
Suppose that $\langle \gamma_{\delta} \rangle = [F_1, F_2, \ldots, F_k]$.
Then
\begin{equation*}
[\{ F_1, -F_1 \}, \{ F_2, -F_2 \}, \ldots, \{ F_k, -F_k\} = \{ F_1, -F_1\} ]
\end{equation*}
is a cycle in $VG_r(P)$. Therefore $x_i := \varphi^{-1} (\{ F_i, -F_i \})$
exists for each $i = 1, 2, \ldots, k$, and $x_1 = x_k$. Consequently,
\begin{equation*}
\sum\limits_{i = 1}^{k - 1} \langle c, 
\overrightarrow{x_i x_{i + 1}} \rangle = 
\sum\limits_{i = 1}^{k - 1} (\langle c', x_{i + 1} \rangle - 
\langle c', x_i \rangle) = 0. 
\end{equation*}
On the other hand, 
\begin{equation*}
\sum\limits_{i = 1}^{k - 1} \langle c, 
\overrightarrow{x_i x_{i + 1}} \rangle = 
\sum\limits_{i = 1}^{k - 1} \ln g(F_i, F_{i + 1}) = \ln g(\gamma) \neq 0. 
\end{equation*}
A contradiction shows that indeed $c$ is not a coboundary. This concludes
the proof.
\end{proof}

\section{Graph approach}\label{sec:graph}

In this section we present a method to verify the 
GGM condition. This approach, using the group of cycles of the red 
Venkov graph $VG_r(P)$, was proposed in~\cite{Garber_4dim}.

The two definitions below enable us to use the relation between the topology of $P_{\pi}$ on one side, and the topology of $VG_r(P)$ and
$Ven(P)$ on the other side.

\begin{definition}\label{def:identification}
Let $\gamma_{\pi}$ be a generic closed path on $P_{\pi}$. Let 
$\gamma_{\pi}$ be lifted onto $P_{\delta}$ as $\gamma_{\delta}$, and let
\begin{equation*}
\langle \gamma_{\delta} \rangle = [F_1, F_2, \ldots, F_k].
\end{equation*}
Consider the two cases
\begin{enumerate}
\item Let $\gamma_{\delta}$ pass through at least two different
facets of $P$. In this case $x_i := \varphi^{-1}(\{ F_i, -F_i \})$ 
are well-defined for all $i = 1, 2, \ldots, k$. Then we say that 
$\gamma_{\pi}$ {\it is identified} with the cycles
\begin{equation*}
[\{ F_1, -F_1 \}, \{ F_2, -F_2 \}, \ldots, 
\{ F_k, -F_k \} = \{ F_1, -F_1 \}] \quad \text{and} \quad 
[x_1, x_2, \ldots, x_k = x_1]
\end{equation*}
of the red Venkov graph $VG_r(P)$ and the Venkov complex $Ven(P)$ 
respectively.
\item If $\gamma_{\delta}$ is contained in a single facet of $P$, we say
that $\gamma_{\pi}$ {\it is identified} with the empty cycle of $VG_r(P)$ 
(or $Ven(P)$).
\end{enumerate} 
\end{definition}

\begin{remark}
Conversely, consider an arbitrary cycle 
$[x_1, x_2, \ldots, x_{k - 1}, x_k = x_1]$,
where $k > 2$, $x_i \in vert(Ven(P))$ and $\overrightarrow{x_i x_{i + 1}}$
is an oriented edge of $Ven(P)$. Then there exists a closed generic 
path on $P_{\pi}$ identified with this cycle. 
\end{remark}

\begin{definition}
A cycle $c \in C_1(Ven(P))$ is called a {\it combinatorial half-belt cycle}
(respectively, {\it combinatorial trivially contractible cycle}\, or
{\it combinatorial Ordine cycle}) if it is identified with a half-belt
(respectively, trivially contractible or Ordine) cycle $\gamma$ on
$P_{\pi}$.
\end{definition}

Now we are ready to reformulate the GGM condition in terms of the Venkov
complex.

\begin{lemma}\label{lem:ggm-cond}
The following assertions are equivalent.
\begin{enumerate}
\item[(i)] The group $H_1(P_{\pi}, \mathbb Q)$ is generated by half-belt
cycles.
\item[(ii)] The implication 
$\bigl( A_1(c) \land A_2(c) \bigr) \Rightarrow B(c)$ holds 
for all cochains $c \in C^1(Ven(P), \mathbb Q)$, where
\begin{align*}
A_1(c) := & \quad \bigl[ \text{$\langle c, \gamma \rangle = 0$ for every 
half-belt cycle $\gamma \in C_1(Ven(P), \mathbb Q)$} \bigr], \\
A_2(c) := & \quad \bigl[ \text{$\langle c, \gamma \rangle = 0$ for every 
trivially contractible cycle 
$\gamma \in C_1(Ven(P), \mathbb Q)$} \bigr], \\ 
B(c) := & \quad \bigl[ \text{$c$ is a coboundary} \bigr].
\end{align*}
\end{enumerate}
\end{lemma}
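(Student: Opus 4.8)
The plan is to connect the three objects involved --- the topology of $P_\pi$, the $1$-cycles of $Ven(P)$, and the gain cochain --- via the cochain construction already used in the proof of Theorem~\ref{thm:simp-condition}. First I would set up the dictionary between closed generic paths on $P_\pi$ and cycles in $C_1(Ven(P),\mathbb Q)$. By Definition~\ref{def:identification} and the subsequent remark, every generic closed path $\gamma_\pi$ is identified with a cycle in $Ven(P)$, and conversely every cycle of length $>2$ in $Ven(P)$ comes from such a path; moreover, half-belt (resp.\ trivially contractible, Ordine) paths correspond exactly to combinatorial half-belt (resp.\ trivially contractible, Ordine) cycles. The key point is that the gain functional $\gamma_\pi \mapsto \ln g(\gamma_\pi)$ factors through this identification: by~\eqref{eq:cochain_def} and the proof of Theorem~\ref{thm:simp-condition}, a $1$-cochain $c$ on $Ven(P)$ with values $\langle c, \overrightarrow{x_1 x_2}\rangle = \ln g(F_1, \pm F_2)$ satisfies $\langle c, (\text{cycle identified with } \gamma_\pi)\rangle = \ln g(\gamma_\pi)$, and $c$ is automatically a cocycle (this is exactly what the cocycle computation in that proof shows, using Lemma~\ref{lem:basic-circuits} only on the $2$-faces of $Ven(P)$). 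So the space of ``gain cochains'' obtained this way is a linear subspace of $Z^1(Ven(P),\mathbb Q)$, and I should identify it: I expect that, since the $1$-skeleton of $Ven(P)$ together with isolated vertices is $VG_r(P)$ (Corollary~\ref{cor:graph-is-1-skeleton}) and the $g$-values on edges of $VG_r(P)$ can be prescribed essentially freely subject to the cocycle relations forced by $2$-faces, the assignment $c \mapsto (\text{values of } c)$ realizes \emph{every} cocycle in $Z^1(Ven(P),\mathbb Q)$, or at least enough of them for the argument.

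With this dictionary, I would prove the two implications. For (i)$\Rightarrow$(ii): assume $H_1(P_\pi,\mathbb Q)$ is generated by half-belt cycles, and let $c$ be a $1$-cochain satisfying $A_1(c)$ and $A_2(c)$. Using Proposition~\ref{prop:unit_gain}, the hypothesis (i) is equivalent to the GGM condition. Take any generic closed path $\gamma_\pi$ on $P_\pi$; I want $\langle c, [\gamma_\pi]\rangle = 0$ where $[\gamma_\pi]$ is the identified $Ven(P)$-cycle, which (after summing over a basis of cycles and using that a cochain vanishing on all cycles is a coboundary) gives $B(c)$. The class of $\gamma_\pi$ in $H_1(P_\pi,\mathbb Q)$ is a rational combination of half-belt classes; the difference between $\gamma_\pi$ and this combination is a boundary in $P_\pi$, hence is a sum of boundaries of $2$-cells of the natural CW-structure on $P_\pi$ --- and here I need that each such $2$-cell boundary maps to a combination of trivially contractible cycles (or directly to a cycle on which any cocycle of $Ven(P)$ vanishes, since it bounds a $2$-face). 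Combining $A_1(c)$ (kills half-belt part) with $A_2(c)$ and the cocycle property of $c$ (kills the boundary part) yields $\langle c, [\gamma_\pi]\rangle = 0$ for all $\gamma_\pi$, so $c$ is a coboundary.

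For (ii)$\Rightarrow$(i): I argue by contrapositive. Suppose $H_1(P_\pi,\mathbb Q)$ is \emph{not} generated by half-belt cycles. Then there is a $\mathbb Q$-linear functional on $H_1(P_\pi,\mathbb Q)$ vanishing on all half-belt classes but not identically zero; pull it back to a cochain on the CW-structure of $P_\pi$, then transport it to a cochain $c$ on $Ven(P)$ via the identification of cycles. By construction $A_1(c)$ holds, and $A_2(c)$ holds because trivially contractible cycles are null-homologous in $P_\pi$ (they bound, being contractible by the very name, or by Lemma~\ref{lem:basic-circuits} case (TC) together with the structure of the belt around a $(d-3)$-face). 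But $c$ is not a coboundary, since it represents a nonzero cohomology class: there is a cycle --- which can be realized by some generic closed path $\gamma_\pi$ on $P_\pi$ by the remark after Definition~\ref{def:identification} --- on which $c$ does not vanish. Hence $B(c)$ fails, contradicting (ii).

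The main obstacle, I expect, is the precise bookkeeping at the interface between the CW-homology of $P_\pi$ and the simplicial homology of $Ven(P)$: one must check that the identification of cycles (Definition~\ref{def:identification}) descends to a well-defined map on homology with the right image, in particular that homologous paths on $P_\pi$ are identified with homologous cycles in $Ven(P)$, and that the $2$-cells of $P_\pi$ --- i.e., the primitive $(d-2)$-faces and the local pictures around $(d-3)$-faces --- correspond exactly to the $2$-faces of $Ven(P)$ coming from the sets $X(D)$, so that ``bounds a $2$-cell of $P_\pi$'' and ``bounds a combination of $2$-faces of $Ven(P)$'' agree up to combinatorial half-belt and trivially contractible cycles. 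Once this correspondence of $2$-cells is nailed down, everything else is a formal diagram chase with $\mathbb Q$-coefficients using $H^1 = \operatorname{Hom}(H_1,\mathbb Q)$ and the fact that a cocycle is a coboundary iff it annihilates every cycle.
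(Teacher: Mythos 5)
Your proposal is correct and follows essentially the same route as the paper: for (i)$\Rightarrow$(ii) the paper uses $A_2(c)$ to show that $\langle c,\cdot\rangle$ descends to a homomorphism on $\pi_1(P_\pi)$ and hence on $H_1(P_\pi,\mathbb Q)$, then kills it with $A_1(c)$ and hypothesis (i), while for (ii)$\Rightarrow$(i) it performs exactly your contrapositive construction, choosing a functional $\ell$ on $H_1(P_\pi,\mathbb Q)$ vanishing on the half-belt classes and transporting it to a cochain on $Ven(P)$ via the identification of cycles. One small caution: in your (i)$\Rightarrow$(ii) step the appeal to ``the cocycle property of $c$'' is out of place, since $c$ is an arbitrary cochain and not assumed to be a cocycle, but this is harmless because, as you also note, $A_2(c)$ alone kills the boundary contributions (deforming a generic path across a $(d-3)$-face changes the identified cycle exactly by a trivially contractible cycle).
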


\begin{remark}
The condition $B(c)$ is equivalent to the statement that 
$\langle c, \gamma \rangle = 0$ for every 1-cycle $\gamma$, i.e.,
for every $\gamma \in C_1(Ven(P), \mathbb Q)$ satisfying 
$\partial \gamma = 0$.
\end{remark}

\begin{proof}[Proof of Lemma~\ref{lem:ggm-cond}]
\noindent {\bf (i) $\Rightarrow$ (ii).} Let $P$ be a parallelohedron
$P$ satisfying (i). Consider an arbitraty cochain $c \in C^1(Ven(P), \mathbb Q)$ for which both $A_1(c)$ and $A_2(c)$ are true. We claim
that $B(c)$ is true as well.

Let $\gamma_{\pi} \subset P_{\pi}$ be a closed generic curve. By 
Definition~\ref{def:identification}, $\gamma_{\pi}$ is identified with
a cycle $\gamma \in C_1(Ven(P), \mathbb Q)$. We then set
\begin{equation*}
c^*(\gamma_{\pi}) := \langle c, \gamma \rangle.
\end{equation*}
Since $A_2(c)$ holds, the value of $c^*(\gamma_{\pi})$ depends only on
the homptopy type of $\gamma_{\pi}$. Therefore $c^*$ acts as a map
$c^* : \pi_1(P_{\pi}) \to \mathbb Q$. By construction $c^*$ is a
homomorphism, vanishes on the commutant of $\pi_1(P_{\pi})$ and
its image lies in the field $\mathbb Q$ of characteristic zero.  
Consequently, the action of $c^*$ on the group $H_1(P_{\pi}, \mathbb Q)$
is also well-defined. By the property $A_1(c)$, all half-belt cycles 
lie in the kernel of $c^*$. Using (i), we conclude that $c^*$
acts on $H_1(P_{\pi}, \mathbb Q)$ trivially, which is only possible
if $c$ is a coboundary. Hence the implication (i) $\Rightarrow$ (ii)
holds.

%Suppose that the group $H_1(P_\pi,\mathbb{Q})$ is generated by half-belt 
%cycles. Consider an arbitrary element $\gamma\in C_1(Ven(P),\QQ)$ 
%we can find its integer multiple $n\gamma$ that can be represented as a 
%closed primitive path on $P_\pi$, and therefore written as an integer 
%linear combination of half-belt cycles and trivially contractible cycles.

%Thus, if $c\in C^1(Ven(P),\QQ)$ satisfies $A_1(c)$ and $A_2(c)$, then $
%\langle c,\gamma\rangle=0$ for every $\gamma \in C_1(Ven(P),\QQ)$, and 
%$c$ is a coboundary.

\noindent {\bf (ii) $\Rightarrow$ (i).} 
Assume that (i) is false for $P$.
Let $G$ be the proper subgroup of $H_1(P_\pi,\mathbb Q)$  generated by half-belt cycles. Let 
$\ell : H_1(P_\pi,\mathbb Q) \to \mathbb Q$ be a linear map
such that $G \subseteq Ker\, \ell \subsetneq H_1(P_\pi,\mathbb Q)$.

Denote 
\begin{equation*}
\bar{C}_1(Ven(P), \mathbb Q) := \{ \gamma \in C_1(Ven(P), \mathbb Q) :
\partial \gamma = 0 \}.
\end{equation*}
Let $\gamma \in \bar{C}_1(Ven(P), \mathbb Q)$.
Then there exists $n \in \mathbb N$ such that 
$n \gamma = \sum\limits_{i = 1}^s \gamma^i$, where
$\gamma^i = [x^i_1, x^i_2, \ldots, x^i_{k_i} = x^i_1]$.
For each $\gamma^i$ there exists a closed generic path $\gamma^i_{\pi}$
identified with it according to Definition~\ref{def:identification}. By
setting
\begin{equation*}
\langle \ell_*, \gamma \rangle := \frac{1}{n} 
\sum\limits_{i = 0}^s \langle \ell, h^i \rangle,
\end{equation*}
where $h_i$ is the homology class of $\gamma^i_{\pi}$, we obtain
a linear map $\ell_* : \bar{C}_1(Ven(P), \mathbb Q) \to \mathbb Q$.
Let $c \in C^1(Ven(P), \mathbb Q)$ be an arbitrary continuation of
$\ell_*$ onto $C_1(Ven(P), \mathbb Q)$. Clearly, $c$ satisfies conditions $A_1(c)$ and $A_2(c)$, but not $B(c)$.
\end{proof}

The second condition of the previous lemma can be stated in terms of the group of cycles of the red Venkov graph $VG_r(P)$. Recall that,
by Corollary~\ref{cor:graph-is-1-skeleton}, the edge structure of 
$VG_r(P)$ and the edge structure of the Venkov complex $Ven(P)$
are isomorphic.

\begin{definition}
A cycle of $VG_r(P)$ identified either with a half-belt or with a 
trivially contractible cycle on $P_{\pi}$ is called a {\em basic cycle}.
The set of all basic cycles is denoted by $\mathcal C(P)$.
\end{definition}

\begin{definition}
If we treat a finite (non-directed) graph $G$ as a one-dimensional simplicial complex, then the group $H_1(G,\QQ)$ is called {\em the group of cycles} of $G$.
\end{definition}

\begin{remark}
The group of cycles of $G$ is a free abelian group (or a linear space over $\QQ$) of rank $e-v+k$ where $e$ is the number of edges, $v$ is the number of vertices, and $k$ is the number of connected components of $G$.
\end{remark}

Then we can reformulate Lemma~\ref{lem:ggm-cond} in the following way.

\begin{lemma}\label{lem:cycles}
The group $H_1(P_\pi,\QQ)$ is generated by half-belt cycles if and only if the group of cycles of the red Venkov graph of $P$ is generated by $\mathcal{C}(P)$.
\end{lemma}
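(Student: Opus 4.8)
The plan is to deduce Lemma~\ref{lem:cycles} directly from Lemma~\ref{lem:ggm-cond}, using the isomorphism between the edge structures of $VG_r(P)$ and the $1$-skeleton of $Ven(P)$ provided by Corollary~\ref{cor:graph-is-1-skeleton}. First I would observe that adding isolated vertices to a graph does not change its group of cycles, so the group of cycles of $VG_r(P)$ coincides with $H_1(\mathrm{sk}_1(Ven(P)), \QQ)$, and hence contains $\bar C_1(Ven(P),\QQ)$ as its space of $1$-cycles. Next I would note that the basic cycles $\mathcal C(P)$ are exactly the images in $VG_r(P)$ of the half-belt and trivially contractible cycles of $Ven(P)$, so "the group of cycles of $VG_r(P)$ is generated by $\mathcal C(P)$" is literally the statement that $\bar C_1(Ven(P),\QQ)$ is spanned (over $\QQ$) by half-belt and trivially contractible cycles.

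Second, I would translate this spanning condition into the cochain implication $(ii)$ of Lemma~\ref{lem:ggm-cond} by linear-algebra duality over $\QQ$. A subspace $W \subseteq \bar C_1(Ven(P),\QQ)$ equals the whole space if and only if every linear functional on $C_1(Ven(P),\QQ)$ vanishing on $W$ also vanishes on all of $\bar C_1(Ven(P),\QQ)$; the latter vanishing is precisely the condition $B(c)$ (by the Remark following Lemma~\ref{lem:ggm-cond}, $B(c)$ holds iff $\langle c,\gamma\rangle = 0$ for every cycle $\gamma$). Taking $W$ to be the span of half-belt and trivially contractible cycles, a functional vanishes on $W$ exactly when both $A_1(c)$ and $A_2(c)$ hold. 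Thus "$W = \bar C_1(Ven(P),\QQ)$" is equivalent to "$\bigl(A_1(c)\land A_2(c)\bigr)\Rightarrow B(c)$ for all $c$", which is condition $(ii)$ of Lemma~\ref{lem:ggm-cond}. Combining with the equivalence $(i)\Leftrightarrow(ii)$ already established there completes the proof.

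The one point requiring a little care — and the place I would expect the main (though minor) obstacle — is matching up the notions of "basic cycle" in $VG_r(P)$ with the "half-belt" and "trivially contractible" cycles appearing in $Ven(P)$ in Lemma~\ref{lem:ggm-cond}. One must check that the identification of Definition~\ref{def:identification}, together with $\varphi$, carries the set $\mathcal C(P)$ of basic cycles of $VG_r(P)$ onto (a generating set for the same span as) the half-belt and trivially contractible cycles of $Ven(P)$, and that this correspondence respects $\QQ$-linear combinations up to the usual clearing of denominators. This is essentially bookkeeping: Ordine cycles are by definition not basic, but every Ordine cycle arising from a pyramidal dual cell is, within $Ven(P)$, homologous to a combination of half-belt cycles through the octahedral relations encoded in $X(D)$, so excluding them from the generating set does not matter — exactly as in the proof of Lemma~\ref{lem:ven_face}. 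Once this correspondence is spelled out, the rest is the formal duality argument above, and the lemma follows.
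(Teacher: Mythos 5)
Your proposal is correct and follows essentially the same route as the paper: both reduce the statement to condition (ii) of Lemma~\ref{lem:ggm-cond}, observe via Corollary~\ref{cor:graph-is-1-skeleton} that this condition lives entirely on the $1$-skeleton (so $Ven(P)$ can be replaced by $VG_r(P)$), and then use the standard $\QQ$-linear duality between ``every cochain annihilating $\mathcal C(P)$ annihilates all cycles'' and ``$\mathcal C(P)$ spans the cycle space.'' Your aside about Ordine cycles is harmless but unnecessary, since they never enter conditions $A_1$, $A_2$ of Lemma~\ref{lem:ggm-cond} in the first place.
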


\begin{proof}
Since the second condition of Lemma \ref{lem:ggm-cond} does not use two-dimensional simplices of $Ven(P)$, we can substitute $Ven(P)$ with the red Venkov graph in it. Then the implication $\bigl( A_1(c) \land A_2(c) \bigr) \Rightarrow B(c)$ means that the rank of the subgroup generated by $\mathcal{C}(P)$ is equal to the rank of the group of cycles of $VG_r(P)$, and $\mathcal{C}(P)$ generates the group of cycles.
\end{proof}

\section{Computational results}\label{sec:computational}

This section describes the computer-assisted verification of the 
following two results.

\begin{theorem}\label{thm:comp-simp}
Let a 5-dimensional parallelohedron $P$ be equivalent to some
5-dimensional Voronoi parallelohedron. Then the cohomology
group $H^1(Ven(P), \mathbb R)$ is trivial.
\end{theorem}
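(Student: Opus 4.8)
The plan is to reduce Theorem~\ref{thm:comp-simp} to a finite computation over the classification of 5-dimensional Voronoi parallelohedra, and to argue that this computation depends only on the equivalence class. First I would invoke the classification of Dutour Sikiri\'c, Garber, Sch\"urmann, and Waldmann~\cite{ClassifDim5}: there are finitely many L-types of 5-dimensional Voronoi parallelohedra, hence (by the Remark following Definition~\ref{def:equiv}) finitely many equivalence classes of 5-dimensional Voronoi parallelohedra. Since $Ven(P)$ is built entirely from the combinatorics of the dual 3-cells $\mathcal D^3(P)$ together with the parity classes $\Lambda(P)/2\Lambda(P)$ of their vertices, and these data are determined by the equivalence class (the face poset $\mathcal F(\mathcal T(P))$ together with the induced linear structure on $\Lambda(P)$ modulo $2\Lambda(P)$), the simplicial complex $Ven(P)$ depends only on the equivalence class of $P$. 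Consequently, if $P$ is equivalent to a 5-dimensional Voronoi parallelohedron $P'$, then $Ven(P) = Ven(P')$ as abstract simplicial complexes, and it suffices to verify that $H^1(Ven(P'), \mathbb R) = 0$ for each of the finitely many Voronoi representatives $P'$.

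Next I would describe the actual computation for a single representative $P'$. From the data of~\cite{ClassifDim5} one extracts, for each equivalence class, the list of dual 3-cells together with the labelled vertex sets needed to apply the five cases of the definition of $Ven(P)$; this produces the finite set $X \subseteq T_3(\mathcal A)$ with $\mathcal A = \Lambda(P')/2\Lambda(P')$, and hence the 2-dimensional simplicial complex $Ven(P') = \mathcal C(X)$ explicitly as a list of triangles on a vertex set of size at most $2^5$. One then forms the simplicial cochain complex $C^0 \to C^1 \to C^2$ with real (equivalently, rational) coefficients and computes the rank of $H^1 = \ker \delta_1 / \operatorname{im} \delta_0$ by linear algebra over $\mathbb Q$. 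Verifying that this rank is $0$ for every one of the finitely many classes establishes the theorem. I would record the explicit algorithm and its implementation details, and note that since all arithmetic is over $\mathbb Q$ the computation is exact and hence constitutes a rigorous proof.

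The main obstacle I anticipate is not conceptual but organizational and computational: the classification~\cite{ClassifDim5} contains a large number of L-types, so the computation must be automated and its correctness must be argued carefully. In particular one must be sure that the combinatorial data actually used --- which dual 3-cell is which combinatorial type, and which of the linear relations among vertices holds (e.g.\ which pairing $a+c=b+d$ realizes a pyramid as in case (2), or $a+d=b+e=c+f$ realizes an octahedron as in case (3)) --- is correctly read off from the stored description of each tiling, since the definition of $X(D)$ is sensitive to the choice of these relations. A secondary point requiring care is the passage from $\Lambda(P)$-coordinates to parity classes: one must check that the chosen basis of $\Lambda(P')$ and the resulting identification $\Lambda(P')/2\Lambda(P') \cong (\mathbb Z/2\mathbb Z)^5$ is used consistently across all dual cells, so that a label appearing in two different $X(D)$'s is recognized as the same vertex of $Ven(P')$. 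Once these bookkeeping issues are handled, the rank computation itself is routine exact linear algebra.
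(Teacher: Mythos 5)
Your proposal matches the paper's proof in both structure and substance: the paper likewise reduces the statement to a finite computation over the $110\,244$ representatives from~\cite{ClassifDim5}, computes the dual complex $\mathcal D(P_i)$ (via Proposition~\ref{prop:dual-faces} or the algorithm of~\cite{DSV_MathComp2009}), builds $Ven(P_i)$, and verifies triviality of $H^1$ by the exact rank identity $rank(\delta_0) + rank(\delta_1) - f_1(Ven(P_i)) = 0$ in {\tt GAP}. The observation that it suffices to treat one representative per equivalence class is also exactly how the paper justifies the reduction.
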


\begin{theorem}\label{thm:comp-ggm}
Let a 5-dimensional parallelohedron $P$ be equivalent to some
5-dimensional Voronoi parallelohedron. Then the GGM condition holds
for $P$.
\end{theorem}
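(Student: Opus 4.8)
The plan is to reduce both theorems to a finite computation over the complete list of 5-dimensional L-types, exploiting the fact that $Ven(P)$, the red Venkov graph $VG_r(P)$, and the family of basic/half-belt/Ordine cycles all depend only on the equivalence class of $P$ (Definition~\ref{def:equiv}), and that every 5-dimensional parallelohedron equivalent to a Voronoi one has the same combinatorial data as that Voronoi parallelohedron. Concretely, I would first recall that the classification~\cite{ClassifDim5} provides, for each of the $110244$ L-types in dimension $5$, an explicit representative Voronoi parallelohedron together with the face lattice of its tiling; from this data one can algorithmically reconstruct the lattice $\Lambda(P)$, the parity classes $\Lambda(P)/2\Lambda(P)$, all dual $3$-cells $\mathcal D^3(P)$ with their combinatorial types (tetrahedron, octahedron, pyramid, prism, cube) and the required linear relations among their vertices, hence the set $X = \bigcup_{D} X(D)$ and the simplicial complex $Ven(P) = \mathcal C(X)$.

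For Theorem~\ref{thm:comp-simp} I would then, for each L-type, assemble the simplicial cochain complex $C^0(Ven(P),\mathbb R) \to C^1(Ven(P),\mathbb R) \to C^2(Ven(P),\mathbb R)$ — this is a finite linear-algebra object, the boundary maps being signed incidence matrices read directly off $X$ — and compute $\dim H^1 = \dim\ker\delta^1 - \operatorname{rank}\delta^0$. It suffices to check that this number is $0$ for every L-type; since the incidence data is integral, the computation can be carried out exactly over $\mathbb Q$ (equivalently over $\mathbb R$, as cohomology dimensions agree), which removes any floating-point concern. For Theorem~\ref{thm:comp-ggm} I would instead use the graph reformulation of Section~\ref{sec:graph}: by Lemma~\ref{lem:cycles} the GGM condition for $P$ is equivalent to the statement that the cycle space $H_1(VG_r(P),\mathbb Q)$, whose rank is $e - v + k$ as in the remark, is spanned by the set $\mathcal C(P)$ of basic cycles (half-belt cycles coming from belts of $P$, plus trivially contractible cycles coming from $(d-3)$-faces). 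So the algorithm builds $VG_r(P)$ from the primitive-adjacency data of facets, enumerates the half-belt cycles (one per parallel class of primitive $(d-2)$-faces, via the structure in condition (HB)) and the trivially contractible cycles (one per $(d-3)$-face, via condition (TC)), writes each as a vector in $\mathbb Q^{E_r}$, and verifies that the rank of the matrix of these vectors equals $e - v + k$.

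The main obstacles are practical rather than conceptual. The first is sheer scale: there are over $10^5$ five-dimensional L-types, and for each one the dual complex and the Venkov complex can be sizeable, so the implementation must be efficient in reading the face-lattice data of~\cite{ClassifDim5}, enumerating dual $3$-cells, and doing exact rank computations; I would expect to need careful bookkeeping of parity classes (using the equivalent forms of $X(D)$ noted in the Remark after the definition of $Ven(P)$ so that triples are identified consistently) and perhaps modular-arithmetic rank certificates to keep the linear algebra fast while remaining rigorous. The second obstacle is correctness of the reduction itself: one must be sure that the combinatorial data extracted really is an invariant of the L-type and that the representative parallelohedron output by the classification is handled in the format the algorithm expects; this is addressed by verifying internal consistency checks (e.g.\ that $VG_r(P)$ obtained from adjacencies agrees, up to isolated vertices, with the $1$-skeleton of $Ven(P)$, as guaranteed by Corollary~\ref{cor:graph-is-1-skeleton}). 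Once the two enumerations return ``trivial'' and ``spanning'' respectively for all L-types, Theorems~\ref{thm:comp-simp} and~\ref{thm:comp-ggm} follow, and combined with Theorem~\ref{thm:simp-condition} (resp.\ Theorem~\ref{thm:ggm}) they yield Theorem~\ref{thm:main}.
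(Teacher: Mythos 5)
Your proposal matches the paper's argument: both reduce the statement to a finite, exact computation over the $110\,244$ representatives from the classification of~\cite{ClassifDim5}, build $VG_r(P_i)$ and the basic cycles from the dual complex, and verify via Lemma~\ref{lem:cycles} that the $\mathbb Q$-rank of $\mathcal C(P_i)$ equals $e-v+k$. The only substantive point to keep explicit is the one you already note, namely that the data entering this check is an invariant of the equivalence class, so verifying one representative per class suffices.
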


Theorem~\ref{thm:main} is an immediate corollary of each of the
above theorems, as explained below.

\begin{proof}[Proof of Theorem~\ref{thm:main}]
The ``only if'' part is straightforward.

The ``if'' part is a combination of either Theorems~\ref{thm:comp-simp}
and~\ref{thm:simp-condition}, or of Theorems~\ref{thm:comp-ggm} 
and~\ref{thm:ggm}. 
\end{proof}

It is sufficient to verify the conclusions of Theorems~\ref{thm:comp-simp}
and~\ref{thm:comp-ggm} for a single representative of each equivalence
class of 5-dimensional Voronoi parallelohedra.

The list of representatives is available due to the algorithm 
of~\cite{ClassifDim5}. Each representative $P_i$ 
($1 \leq i \leq 110\,244$) is presented in two equivalent ways:
\begin{enumerate}
\item As a cell of $\mathbf 0$ in the Voronoi tessellation for
$\mathbb Z^5$, where the metric is given by an explicit quadratic form
$Q_i$, i.e., $\| a \| = Q(a, a)$. $Q_i$ is presented by its $5 \times 5$
matrix with integer entries.
\item As a convex hull of a set of vertices given explicitly by listing
the coordinates. All coordinates are rational numbers. Additionally, every
face of $P_i$ is described by listing its vertices. 
\end{enumerate}

Our first goal is to compute the dual complex $\mathcal D(P_i)$, which is,
in this case, a Delaunay tessellation $\mathcal D(\mathbb Z^5, Q_i)$.
We use two different approaches. The direct approach uses 
the algorithm of~\cite{DSV_MathComp2009}, which is available 
in~\cite{polyhedral}. The second approach uses the following
proposition.

\begin{proposition}[See, for instance,~\cite{Ordine_thesis}]%
\label{prop:dual-faces}
The following assertions hold.
\begin{enumerate}
\item Let $G$ be a face of parallelohedron $P$. Then
\begin{equation*}
D(G) = \{ -v \, \vert \, \text{$v \in \Lambda(P)$ and $G + v$ is
a face of $P$} \}.
\end{equation*}
\item $D$ is a dual $k$-cell of $\mathcal D(P)$ if and only if 
$D = D(G) + v$, where $G$ is a $(d - k)$-face of $P$ and 
$v \in \Lambda(P)$. 
\end{enumerate}
\end{proposition}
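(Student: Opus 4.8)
The plan is to unwind the definitions of $D(G)$ and $\mathcal D(P)$ from Section~\ref{sec:pisurface} and to exploit the translation structure of $\mathcal T(P)$ recalled in the introduction, namely that its tiles are exactly $\{\, P + u : u \in \Lambda(P) \,\}$ with the tile $P + u$ centered at $u$.

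\emph{Assertion (1).} Since the faces of the translate $P + u$ are precisely the sets $F + u$ with $F$ a face of $P$, the condition ``$P + u$ contains $G$ as a face'' is equivalent to ``$G - u$ is a face of $P$''. By the very definition of $D(G)$ as the set of centers of those tiles of $\mathcal T(P)$ that contain $G$ as a face, this gives $D(G) = \{\, u \in \Lambda(P) : G - u \text{ is a face of } P \,\}$, and the substitution $v := -u$ produces exactly the asserted formula (in particular $\mathbf 0 \in D(G)$, corresponding to $v = \mathbf 0$). This step is purely formal.

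\emph{Assertion (2).} I would isolate two ingredients. The first is the \emph{translation equivariance} of the dual-cell map: for every face $H$ of $\mathcal T(P)$ and every $v \in \Lambda(P)$ one has $D(H + v) = D(H) + v$, which follows immediately from the chain $u \in D(H + v) \iff P + (u - v) \text{ has } H \text{ as a face} \iff u - v \in D(H) \iff u \in D(H) + v$. The second is that \emph{every face of $\mathcal T(P)$ is a face of some tile}: because $\mathcal T(P)$ is face-to-face by the Venkov--McMullen theorem~\cite{Venkov,McMullen}, its face poset is the union of the face posets of its tiles, so each $(d - k)$-face $H$ of $\mathcal T(P)$ can be written $H = G + v$ with $G$ a $(d - k)$-face of $P$ and $v \in \Lambda(P)$, and $\dim H = \dim G$. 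Combining the two: by definition $D$ is a dual $k$-cell of $\mathcal D(P)$ iff $D = D(H)$ for some $(d - k)$-face $H$ of $\mathcal T(P)$; by the second ingredient this holds iff $D = D(G + v)$ for some $(d - k)$-face $G$ of $P$ and some $v \in \Lambda(P)$; and by the first ingredient this in turn holds iff $D = D(G) + v$ for such $G$ and $v$. Both directions of the stated equivalence are thereby covered.

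There is no genuine obstacle: the proposition is essentially a bookkeeping lemma. The two points that merit a word of care are that it is precisely the face-to-face property that allows every cell of $\mathcal D(P)$ to be represented as a translate of a cell $D(G)$ with $G$ a face of $P$ itself (and not merely a face of some tile in an ambiguous position), and that the sign conventions in the formula for $D(G)$ of part (1) and in the equivariance identity of part (2) be kept mutually consistent. Everything else follows directly from the definitions.
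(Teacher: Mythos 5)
Your proof is correct: both the sign bookkeeping in assertion (1) and the combination of translation equivariance with the face-to-face property in assertion (2) are exactly what is needed. The paper itself gives no proof of this proposition (it is quoted from Ordine's thesis as a known fact), and your definition-unwinding argument is the standard one, so there is nothing to add.
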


Using Proposition~\ref{prop:dual-faces}, one can compute 
$\mathcal D(P_i)$ from the vertex presentation of $P_i$. 
All computations are performed over the field of rationals, therefore
we are not concerned about the issues with machine precision.

Now we proceed with Theorems~\ref{thm:comp-simp} and~\ref{thm:comp-ggm}
separately.

For Theorem~\ref{thm:comp-simp}, we use $\mathcal D(P_i)$ to construct
the simplicial complex $Ven(P_i)$. After that, we check the triviality 
of $H^1(Ven(P_i), \mathbb R)$ by verifying the identity
\begin{equation*}
\dim (Im\, \delta_0) = \dim (Ker\, \delta_1),
\end{equation*}
where $\delta_0$ and $\delta_1$ are restrictions of the coboundary operator
$\delta$ to the spaces $C^0(Ven(P_i), \mathbb R)$ and 
$C^1(Ven(P), \mathbb R)$, respectively.

But $\dim (Im\, \delta_0) = rank(\delta_0)$ and 
$\dim (Ker\, \delta_1) = f_1(Ven(P_i)) - rank(\delta_1)$.
Therefore the condition of Theorem~\ref{thm:simp-condition}
is equivalent to the identity
\begin{equation}\label{eq:cohom_ranks}
rank(\delta_0) + rank(\delta_1) - f_1(Ven(P_i)) = 0.
\end{equation}
The identity~\eqref{eq:cohom_ranks} is verified by passing
$Ven(P_i)$ to the {\tt GAP} package~\cite{GAP, simpcomp}, 
where the functions on the left-hand side 
of~\eqref{eq:cohom_ranks} are readily available. The scripts that process
the vertex representation of five-dimensional parallelohedra
into a {\tt GAP} program are available on the web-page~\cite{py-scripts}. 

Similarly, for Theorem~\ref{thm:comp-ggm}, the dual complex 
$\mathcal D(P_i)$ is used to construct the graph $VG_r(P_i)$
and to determine the half-belt and the trivially contractible
cycles. 

The group of cycles of $VG_r(P_i)$ has rank
$e - v + k$, where $e$ is the number of edges, $v$ is the number of
vertices and $k$ is the number of connected components.
Therefore, by Lemma~\ref{lem:cycles}, it is sufficient to verify
that the $\mathbb Q$-rank of the set $\mathcal C(P_i)$
equals $e - v + k$. This is done by a straightforward computation.

\section{Relations between sufficient conditions}\label{sec:reduction}

In this section we show that the cohomology condition generalizes both
Ordine's 3-irreducibility condition and the GGM condition. 

First we deal with the Ordine condition. Recall
that a parallellohedron $P$ is 3-irreducible if no
3-cell of the dual complex $\mathcal D(P)$ is equivalent to a prism
or a cube. Ordine~\cite{Ordine_thesis} proved that every
3-irreducible parallelohedron of dimension $\geq 5$ is affinely Voronoi.

\begin{lemma}\label{lem:cohom_from_ordine}
Let $P$ be a 3-irreducible parallelohedron of dimension $d \geq 5$. 
Then the group $H^1(Ven(P), \mathbb R)$ is trivial.  
\end{lemma}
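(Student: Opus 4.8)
The plan is to show that under the hypothesis of 3-irreducibility, $Ven(P)$ is built exclusively from octahedral pieces of the form $\mathcal C(O(\cdot))$, and to exploit the fact that each such octahedron is simply connected to trivialize $H^1$. First I would observe that, since $P$ is 3-irreducible, every dual 3-cell $D \in \mathcal D^3(P)$ is a combinatorial tetrahedron, octahedron, or quadrangular pyramid; in each of these three cases the definition of $Ven(P)$ assigns to $D$ a set $X(D)$ of the shape $O(a,b,c,a',b',c')$, so the subcomplex $\mathcal C(X(D))$ is combinatorially the boundary of an octahedron. Thus $Ven(P) = \bigcup_{D} \mathcal C(X(D))$ is a union of $2$-spheres (octahedral surfaces), glued to one another along common vertices and edges.

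The key step is a Mayer--Vietoris / nerve-type argument on this union. Each piece $\mathcal C(X(D))$ has $H^1 = 0$ because the $2$-sphere is simply connected (indeed the octahedral surface has trivial first cohomology over $\mathbb R$). Gluing finitely many such pieces along subcomplexes that are themselves unions of vertices and edges, I would argue by induction on the number of dual $3$-cells: if $K = K' \cup S$ where $K'$ is the union of the first $m$ octahedral pieces and $S = \mathcal C(X(D_{m+1}))$, then $K' \cap S$ is a graph (a $1$-complex), and the Mayer--Vietoris sequence
\begin{equation*}
H^0(K') \oplus H^0(S) \to H^0(K' \cap S) \to H^1(K) \to H^1(K') \oplus H^1(S)
\end{equation*}
has $H^1(K') = 0$ by induction and $H^1(S) = 0$; so $H^1(K)$ is a quotient of $\operatorname{coker}\bigl(H^0(K') \oplus H^0(S) \to H^0(K' \cap S)\bigr)$. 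The latter cokernel measures how the connected components of the intersection graph $K' \cap S$ fit together; to control it I would need that this intersection does not create extra "independent" components --- equivalently, that no cycle of $Ven(P)$ survives that is not already bounding inside one of the octahedral pieces. This is where I expect to invoke Lemma~\ref{lem:ven_face} together with the structure of basic cycles: every edge of $Ven(P)$ lies in at least one octahedral face, and the trivially contractible and Ordine cycles supplied by the pyramidal and octahedral $3$-cells generate enough relations to kill $H^1$.

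The main obstacle, I anticipate, is precisely bookkeeping the gluing pattern: a priori two octahedral pieces could share a disconnected graph, and iterated gluings might in principle accumulate a nonzero $H^1$ even though each piece is a sphere. The cleanest route around this is probably \emph{not} a raw Mayer--Vietoris induction but rather a direct chain-level argument: take a cocycle $c \in C^1(Ven(P), \mathbb R)$, use that $c$ restricted to each octahedral piece is a coboundary (since $H^1$ of each piece vanishes), i.e. $c|_{\mathcal C(X(D))} = \delta c'_D$ for some $0$-cochain $c'_D$ on the vertices of that piece, and then patch the local primitives $c'_D$ into a global $0$-cochain $c'$ with $\delta c' = c$. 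The patching is possible once one checks that on any two overlapping pieces the difference $c'_D - c'_{D'}$ is locally constant and these constants are consistent around every cycle of the overlap graph --- and the consistency is exactly guaranteed by the vanishing of $c$ on trivially contractible and Ordine cycles, i.e. by the $3$-irreducibility hypothesis, which ensures all gluing cycles are of these controllable types. Carrying out this patching carefully, using Lemma~\ref{lem:basic-circuits} to see that the relevant cycles contribute zero, completes the argument.
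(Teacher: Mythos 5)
Your reduction to the local structure is fine as far as it goes: under 3-irreducibility every dual 3-cell is a tetrahedron, octahedron or quadrangular pyramid, each contributes an octahedral sphere $\mathcal C(X(D))$, and each such piece has trivial $H^1$. But the proof collapses exactly at the step you yourself flag as the obstacle. The cycles you propose to use for the patching --- trivially contractible and Ordine cycles --- are all supported inside a \emph{single} octahedral piece (by Lemma~\ref{lem:ggm-cycles-via-venkov} they are sums of boundaries of Venkov triangles of that one piece), so the hypothesis $\delta c = 0$ already kills them and they carry no new information. The actual obstruction to patching the local primitives $c'_D$ into a global $0$-cochain is the existence of $1$-cycles of $Ven(P)$ that are not supported in any single piece (equivalently, nontrivial cycles in the nerve of the cover by the $\mathcal C(X(D))$'s), and nothing in your argument explains why such cycles bound. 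A union of $2$-spheres glued along graphs can certainly have nonzero $H^1$, so no purely topological or Mayer--Vietoris bookkeeping can close this gap; if it could, the lemma would hold for \emph{any} parallelohedron without prism or cube dual $3$-cells by soft topology alone, which would make Ordine's theorem a triviality.

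The missing ingredient is precisely Ordine's global propagation argument, and that is what the paper uses: given a cocycle $c$, one defines an abstract gain function $g(F,F') := \exp\bigl(c(\overrightarrow{xx'})\bigr)$ on pairs of facets meeting in a primitive $(d-2)$-face, and then reruns the canonical-scaling construction from Ordine's thesis with this $g$ in place of the geometric gain function. That construction assigns a positive number $s(F)$ to every facet by propagating values across primitive $(d-2)$-faces and uses $3$-irreducibility in an essential, global way to prove that the assignment is consistent (i.e., independent of the propagation path); the cocycle condition $\delta c = 0$ is exactly what makes the local consistency checks in Ordine's argument go through. Setting $c'(x) := \ln s(F)$ then exhibits $c$ as the coboundary $\delta c'$. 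So the correct fix for your proposal is not a finer chain-level patching of the octahedral pieces, but an appeal to (or reproduction of) Ordine's path-independence argument, which is where the hypothesis $d \geq 5$ and the full strength of $3$-irreducibility actually enter.
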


\begin{proof}
Let $c \in C^1(Ven(P), \mathbb R)$ be a cocycle. We need to prove that
$c$ is a coboundary.

Let $F$ anf $F'$ be two facets of $P$ such that $F \cap F'$ is a 
primitive $(d - 2)$-face. Then there is an edge between 
$\{ F, -F \}$ and $\{ F', -F' \}$ in $VG_r(P)$. Consequently, 
$\overrightarrow{xx'}$, where $x := \varphi^{-1}(\{ F, -F \})$ and 
$x' := \varphi^{-1}(\{ F', -F' \})$, is an oriented edge of
$Ven(P)$.
 
Set
\begin{equation*}
g(F, F') := \exp \left( c(\overrightarrow{xx'}) \right).
\end{equation*}
One can check that plugging $g$ into the argument of~\cite{Ordine_thesis}
instead of the gain function for $P$ is sufficient to produce an analogue of a canonical scaling. More precisely, there exists a function 
$s$ mapping facets of $P$ to positive real numbers such that the 
identities
\begin{align*}
s(F') = s(F)g(F, F') & \quad 
\text{whenever $F \cap F'$ is a primitive $(d - 2)$-face,} \\
s(F) = s(-F) & \quad \text{for all facets $F$}
\end{align*}
are satisfied.

Letting $c'(x) := \ln s(F)$, where $F \in \varphi(x)$, 
yields $c = \delta c'$. Hence $c$ is indeed a coboundary.
\end{proof}

We proceed by considering the GGM condition.

\begin{definition}
A 2-dimensional face of the Venkov complex $Ven(P)$ is called a {\it Venkov
triangle}.
\end{definition}

\begin{lemma}\label{lem:ggm-cycles-via-venkov}
The following assertions hold.
\begin{enumerate}
\item[(i)] Each combinatorial half-belt cycle is a boundary of a 
Venkov triangle.
\item[(ii)] Each combinatorial trivially contractible cycle is an integer
combination of boundaries of Venkov triangles.  
\end{enumerate}
\end{lemma}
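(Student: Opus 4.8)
The plan is to prove each assertion by working directly with the combinatorial description of the Venkov complex, i.e., with the sets $X(D)$ attached to dual $3$-cells. The key structural fact, provided by the shorthand $O(a,b,c,a',b',c')$, is that whenever a dual cell contributes a full octahedron $\mathcal C(O(\ldots))$ to $Ven(P)$, that octahedral sphere has a natural triangulation whose $2$-faces are Venkov triangles, and the relations among its boundary cycles are exactly the homological relations on the octahedron $S^2$: the boundary of each of the eight Venkov triangles is an integer combination of the ``short'' triangles $\{a,b,c\}$-type cycles, and conversely each triangle of the form $\{\overline{u+v},\overline{u+w},\overline{v+w}\}$ bounds.

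For assertion (i), I would take a combinatorial half-belt cycle $\gamma$, which by definition is identified with a half-belt cycle on $P_\pi$, hence of the form $[F_1,F_2,F_3,-F_1]$ with $F_1,F_2,F_3$ all parallel to a common primitive $(d-2)$-face $G$. In terms of parity classes this cycle is $[x_1,x_2,x_3,x_1]$ where the three labels are the parity classes $\overline{a_i}$ of the facet vectors, and the corresponding triple $\{x_1,x_2,x_3\}$ is forced (by the constructions in the definition of $Ven(P)$ applied to a dual cell incident to $G$) to be one of the eight triples of some $X(D)$; thus $\gamma$ literally \emph{is} the boundary of a Venkov triangle, up to sign and orientation. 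The point to be careful about is that the half-belt cycle comes from three facets sharing a belt direction; one must check that the dual $3$-cell $D(G')$ of a suitable $(d-3)$-subface $G'$ of $G$ actually has these three parity classes among the vertices of one of its $X(D')$-triangles. This is case analysis over the five combinatorial types of $D(G')$ but is routine given the explicit $O(\cdots)$ formulas and the parity relations listed in the remark following the definition of $Ven(P)$.

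For assertion (ii), I would take a combinatorial trivially contractible cycle, identified with $[F_1,\ldots,F_k,F_1]$ where $F_1,\ldots,F_k$ are the distinct facets through a common $(d-3)$-face $G$. The dual cell $D(G)$ is a $3$-polytope of one of the five Delaunay types, and the facets of $P$ through $G$ correspond to the vertices of $\mathrm{conv}\,D(G)$ adjacent to... more precisely, to the edges of $D(G)$ emanating in the belt. The cycle in $Ven(P)$ is then a cycle on the graph $\mathcal C(X(D))^{(1)}$, which is the $1$-skeleton of the octahedron (for tetrahedron, octahedron, pyramid cases) or a single triangle (prism case), or empty (cube case, which cannot occur since then there is no primitive belt). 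Since $H_1$ of the octahedral $2$-sphere is trivial, every cycle in its $1$-skeleton bounds, i.e., is an integer combination of the eight Venkov-triangle boundaries; that gives (ii) directly in those cases, and the prism and cube cases are trivial. I would phrase this as: the cycle lies in the subcomplex $\mathcal C(X(D))\subseteq Ven(P)$, whose first integral homology vanishes because each $\mathcal C(X(D))$ is either a triangulated $S^2$, a single triangle, or empty.

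The main obstacle I anticipate is not topological but bookkeeping: matching the \emph{geometric} enumeration of facets/belt structure around $G$ (how the $k$ facets $F_1,\ldots,F_k$ sit around the $(d-3)$-face, and which pairs meet in primitive $(d-2)$-faces) with the \emph{combinatorial} labelling in $X(D)$ coming from the five Delaunay types, keeping track of the parity identifications such as $\overline{a+d}=\overline{b+c}$ in the pyramid case. I would organize the proof as a short lemma extracting, for each of the five types of $D(G)$, the statement ``the belt cycle around $G$ equals (a specified) boundary in $\mathcal C(X(D))$,'' and then (i) and (ii) follow immediately; (i) is the case where $\gamma_\delta=[F_1,F_2,F_3,-F_1]$ picks out exactly one of the eight triples, and (ii) is the general belt cycle expressed via $H_1(\mathcal C(X(D)),\mathbb Z)=0$.
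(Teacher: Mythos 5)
Your proposal is correct and follows essentially the same route as the paper: for (i) both arguments locate a $(d-3)$-subface of the primitive $(d-2)$-face and observe that the triple of parity classes is one of the eight triples of $X(D)$ for its dual $3$-cell, and for (ii) both reduce the cycle to the subcomplex $\mathcal C(X(D))$ of the dual $3$-cell of the common $(d-3)$-face and note that it bounds there. The only cosmetic difference is that in (ii) the paper exhibits the bounding $2$-chain explicitly (a single triangle, or the four triangles of a hemisphere in the pyramid case), whereas you invoke the vanishing of $H_1$ of the octahedral sphere.
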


\begin{proof} 
Assertion (i).
Let $[x_1, x_2, x_3, x_1]$ be a combinatorial half-belt cycle. Then
one can choose two facets $F_1 \in \varphi(x_1)$ and $F_2 \in \varphi(x_2)$
so that $F_1 \cap F_2$ is a primitive $(d - 2)$-face of $P$. Choose
an arbitrary $(d - 3)$-face $G \subset F_1 \cap F_2$. If $D$ is
the dual 3-cell of $G$, then $\{ x_1, x_2, x_3 \} \in X(D)$, finishing
the proof of the assertion. 

Assertion (ii). Let $\gamma$ be a combinatorial trivially 
contractible cycle around a $(d - 3)$-face $G$. 
Let $D$ be the dual 3-cell of $G$. There are three possibilities 
for the combinatorial type of $D$
--- a tetraheron, a crosspolytope and a quadrangular pyramid.
If $D$ is a tetrahedron or a crosspolytope, then 
$\gamma = [x_1, x_2, x_3, x_1]$, where $\{ x_1, x_2, x_3 \} \in X(D)$. 
Hence $\gamma$ is itself a boundary of a Venkov triangle.
If $D$ is a quadrangular pyramid, then $\gamma = [x_1, x_2, x_3, x_4, x_1]$
and goes along an equator of the octahedron $\mathcal C(X(D))$.
Therefore $\gamma$ is a sum of boundaries of four Venkov triangles
comprising one of the hemispheres of $\mathcal C(X(D))$.
\end{proof}

\begin{lemma}
Let assertion (ii) of Lemma~\ref{lem:ggm-cond} hold. Then the group 
$H^1(Ven(P), \mathbb R)$ is trivial.  
\end{lemma}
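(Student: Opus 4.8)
The plan is to show that assertion (ii) of Lemma~\ref{lem:ggm-cond} forces the triviality of $H^1(Ven(P), \mathbb R)$ by producing, from any cocycle $c \in C^1(Ven(P), \mathbb R)$, an explicit $0$-cochain $c'$ with $c = \delta c'$. First I would let $c \in C^1(Ven(P), \mathbb R)$ be an arbitrary cocycle. Since $\delta c = 0$, the pairing $\langle c, \partial T \rangle = 0$ for every Venkov triangle $T$, which by Lemma~\ref{lem:ggm-cycles-via-venkov} immediately gives $\langle c, \gamma \rangle = 0$ both for every combinatorial half-belt cycle (assertion (i)) and for every combinatorial trivially contractible cycle (assertion (ii), where one expands $\gamma$ as an integer combination of triangle boundaries). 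In other words, the conditions $A_1(c)$ and $A_2(c)$ from Lemma~\ref{lem:ggm-cond} are satisfied --- with the minor caveat that those conditions are stated over $\mathbb Q$; I would remark that the argument is field-agnostic, or pass to a $\mathbb Q$-structure, since $Ven(P)$ is a finite complex and all the cycles involved are integral.

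Next, the hypothesis is that assertion (ii) of Lemma~\ref{lem:ggm-cond} holds, i.e. the implication $\bigl(A_1(c) \land A_2(c)\bigr) \Rightarrow B(c)$ is valid. Applying this to our cocycle $c$ yields $B(c)$, which is precisely the statement that $c$ is a coboundary. Hence $c \in \mathrm{Im}\, \delta_0$, and since $c$ was an arbitrary cocycle we conclude that every element of $\mathrm{Ker}\, \delta_1$ lies in $\mathrm{Im}\, \delta_0$, so $H^1(Ven(P), \mathbb R) = 0$. That completes the proof.

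The only genuine subtlety --- and the step I expect to need the most care --- is the passage between the rational coefficients used in Lemma~\ref{lem:ggm-cond} and the real coefficients appearing in the statement. One clean way to handle this is to observe that $H^1(Ven(P), \mathbb R) \cong H^1(Ven(P), \mathbb Q) \otimes_{\mathbb Q} \mathbb R$ because $Ven(P)$ is a finite simplicial complex, so triviality over $\mathbb Q$ is equivalent to triviality over $\mathbb R$; then Lemma~\ref{lem:ggm-cond}(ii) applied verbatim gives the rational statement. Alternatively, one checks directly that the implication $\bigl(A_1(c) \land A_2(c)\bigr) \Rightarrow B(c)$, being a statement about ranks of integer matrices (the incidence matrices of $Ven(P)$ against the generating set of basic cycles), holds over any field of characteristic zero once it holds over $\mathbb Q$. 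Either way this is a routine linear-algebra observation, and the rest of the argument is the short chain of implications described above.
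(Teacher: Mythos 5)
Your proof is correct and follows essentially the same route as the paper: both arguments use Lemma~\ref{lem:ggm-cycles-via-venkov} to show that any cocycle $c$ satisfies $A_1(c)$ and $A_2(c)$, then invoke the hypothesis to conclude $B(c)$, i.e., that $c$ is a coboundary. The paper handles the $\mathbb Q$-versus-$\mathbb R$ coefficient issue in the same spirit (noting the two cohomology groups are simultaneously trivial or non-trivial), so your extra care there matches the intended argument.
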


\begin{proof}
Note that the groups $H^1(Ven(P), \mathbb R)$ and $H^1(Ven(P), \mathbb Q)$ are either both trivial, or both non-trivial. Therefore for the rest of
the proof we will be working over the field of rationals.

Let $c \in C^1(Ven(P), \mathbb R)$ be a 1-cocycle, i.e., 
$\delta c \equiv 0$. Equivalently, we have
\begin{equation*}
\langle c, \partial \tau \rangle = \langle \delta c, \tau \rangle = 0
\end{equation*}
for every Venkov triangle $\tau$.

By Lemma~\ref{lem:ggm-cycles-via-venkov}, every half-belt cycle and every
trivially contractible cycle can be represented as a combination of
boundaries of Venkov triangles.  Hence, in the notation of 
Lemma~\ref{lem:ggm-cond},  $A_1(c)$ and $A_2(c)$ hold. Therefore
$B(c)$ holds as well, i.e., $c$ is a coboundary.

We thus conclude that every cocycle in $C^1(Ven(P), \mathbb Q)$ is
a coboundary. Hence the group $H^1(Ven(P), \mathbb Q)$ is trivial, and 
so is the group $H^1(Ven(P), \mathbb R)$.
\end{proof}

\section{Concluding remarks}\label{sec:conclusion}

We conclude the paper by posing an open problem.

\begin{problem}\label{prob}
Determine whether the following statements are true or false.
\begin{enumerate}
\item The GGM condition holds for all Voronoi parallelohedra.
\item For every Voronoi parallelohedron $P$ of dimension $d \geq 5$
the cohomology group $H^1(Ven(P), \mathbb R)$ is trivial.
\end{enumerate}
\end{problem}

If any of the statements of Problem~\ref{prob} holds, then a hypothetical
counterexample to the Voronoi conjecture should be non-equivalent
to any Voronoi parallelohedron. 

In particular, for five-dimensional parallelohedra both these conditions are equivalent to the Voronoi conjecture, so if one wants to find a five-dimensional counterexample for the Voronoi conjecture, then it is enough to search for a parallelohedron that violates any of these conditions. So this poses one more open question.

\begin{problem}
Is every five-dimensional parallelohedron combinatorially Voronoi?	
\end{problem}

\end{document}